\newcommand{\ra}{\rightarrow}
\newcommand{\G}{\Gamma}
\newcommand{\DE}{\mathcal{D}^{\perp}}
\newcommand{\n}{\nabla}
\newcommand{\om}{\omega}
\newcommand{\w}{\wedge}
\newcommand{\te}{\theta}
\newcommand{\p}{\partial}
\newcommand{\al}{\alpha}
\newcommand{\0}{\Omega}
\newcommand{\Om}{\Omega}
\newcommand{\bJ}{\bar J}
\newcommand{\J}{\mathcal{J}}
\newcommand{\De}{\mathcal{D}}
\newtheorem{theorem}{Theorem}[section]
\theoremstyle{definition}
\newtheorem{remark}[theorem]{Remark}
\title{A DETAILED DESCRIPTION OF THE GENERALIZED CALABI TYPE K\"AHLER SURFACES}
\author{\\EWELINA MULAWA}
\date{}
\begin{document}

\maketitle

\begin{abstract}
In this paper we study QCH K\"ahler surfaces, i.e. 4-dimensional Riemannian manifolds (of
signature (++++)) admitting a K\"ahler complex structure with quasi-constant holomorphic sectional curvature. We give a detailed description of QCH K\"ahler surfaces of generalized Calabi type.
\end{abstract}

MSC: 53C55, 53C25, 53B35, 32Q15

Key words and phrases:  K\"ahler surface, QCH K\"ahler surface,  Calabi type K\"ahler surface, Hermitian surface with J-invariant Ricci tensor, Ricci tensor, scalar curvature, conformal scalar curvature, Weyl tensor, K\"ahler space form.

\section{Introduction}
In our paper \cite{J-M} we have given a classification of generalized Calabi type K\"ahler surfaces and in particular a classification of Calabi type K\"ahler surfaces. We classified generalized Calabi type K\"ahler QCH surfaces which admit an opposite
Hermitian structure which is not locally conformally K\"ahler. In this paper we give a detailed description of such surfaces. In every case we compute the Ricci tensor, the scalar curvature, the conformal scalar curvature, the Weyl tensor $W^-$, the sectional curvature of the distributions $\De$ and $\DE$. We give the conditions on the generalized Calabi type K\"ahler surfaces which are not of Calabi type (i.e. the opposite Hermitian structure $J$ is not conformally Kähler) to be a space forms. Those conditions hold if and only if the holomorphic sectional curvature is constant. Then the generalized Calabi type Kähler surfaces which are not of Calabi type are Einstein and are the space forms. Then they are locally isometric to $\mathbb{C}^2$ with its standard metric of constant holomorphic sectional curvature $0$ in the semi-symmetric case, $\mathbb{CP}^2$ with the Fubini-Study metric of constant holomorphic sectional curvature $4a^2$ if $\al = 2a \tan az$ and $B^2$ with its standard metric of constant holomorphic sectional curvature $-4a^2$ if $\al = -2a \coth az$ or $\al = -2a \tanh az$. We get also that the generalized Calabi type K\"ahler surfaces which are not of Calabi type are space forms if and only if $W^-=0$. If $(M,g)$ is not a space form, then $W^- \neq 0$. In his paper we get examples of local hermitian structures which are not locally conformally K\"ahler on space forms $\mathbb{CP}^2$, $\mathbb{C}^2$ and $B^2$. The K\"ahler manifolds are studied by many authors, see for example \cite{D-M-2}, \cite{D} and \cite{D-T}. The QCH surfaces are investigated in a series of works \cite{J-1}, \cite{J-2}, \cite{J-3}, \cite{J-4}, \cite{J-5}.

\section{Hermitian surfaces}

Let $(M,g,J)$ be an {\it almost Hermitian manifold}, i.e., $J$ is an  almost complex structure that is  orthogonal with respect to $g$, i.e., $g(X,Y)=g(JX,JY)$ for all $X,Y\in\frak X(M)$. We say that $(M,g,J)$ is a {\it Hermitian  manifold} if the almost Hermitian structure $J$ is integrable, which means that the Nijenhuis tensor $N^J(X,Y)$ vanishes.   This is also equivalent to integrability of the complex distribution $T^{(1,0)}M\subset TM\otimes\Bbb C$.

The {\it K\"ahler form} of $(M,g,J)$ is $\Omega (X,Y)=g(JX,Y)$. If $d\Omega=0$ the Hermitian manifold is called a {\it K\"ahler  manifold}. In the sequel we shall consider  K\"ahler manifolds of real dimension $4$ which are called {\it K\"ahler surfaces}. Such manifolds are always oriented and we choose an orientation in such a way that the K\"ahler form $\Omega(X,Y)=g(JX,Y)$ is self-dual (i.e., $\Omega \in\w^+M$). 

We investigate K\"ahler surfaces admitting an opposite Hermitian structure. A Hermitian 4-manifold $(M,g,J)$ is said to have an {\it opposite Hermitian structure} if it admits an orthogonal Hermitian structure $\bar J$ with an anti-self-dual K\"ahler form $\bar \Omega$. A Hermitian manifold $(M,g,J)$ is said to have {\it $J$-invariant Ricci tensor} $Ric$ if $Ric(X,Y)=Ric(JX,JY)$ for all $X,Y\in \frak X(M)$.

For a Hermitian surface the covariant derivative of the K\"ahler form $\Omega$ is locally expressed by
\[\nabla \Omega =a\otimes\Phi+\J a\otimes\J\Phi,\]
with $\J a(X)=-a(JX)$, $\Phi\in LM$, where $LM$ is the bundle of self-dual forms orthogonal to $\Omega$ and $\J$ is the complex structure on $LM$. The {\it Lee form} $\te$ of $(M,g,J)$ is defined by the equality $d\Omega=2\te\w \Omega$. We have $2\te=-\delta\0\circ J$. An involutive distribution is called a {\it foliation}. A foliation $\De$ is called {\it totally geodesic} if every leaf is a totally geodesic submanifold of $(M,g)$, i.e., $\n_XY\in \G(\De)$ for every $X,Y\in\G(\De)$.

We denote by $R$ the Riemann curvature tensor
\[R(X, Y)Z = ([\nabla_X , \nabla_Y ] - \nabla_{[X,Y]}) Z\]
and we write
$R(X, Y, Z, W) = g(R(X, Y)Z, W)$.

{\it Calabi type K\"ahler surfaces} are K\"ahler surfaces which admit a Hamiltonian Killing vector field $X$, such that the opposite almost Hermitian structure defined by the complex distribution $\De=span\{X,JX\}$ is Hermitian and locally conformally K\"ahler, i.e., $d \te =0$,.

{\it QCH K\"ahler surfaces} are K\"ahler surfaces $(M,g,\bJ)$ admitting a global, $2$-dimensional, $\bJ$-invariant distribution $\De$ having the following property: The holomorphic curvature $K(\pi)=R(X,\bJ X,\bJ X,X)$ of any $\bJ$-invariant $2$-plane $\pi\subset T_xM$, where $X\in \pi$ and $g(X,X)=1$, depends only on the point $x$ and the number $|X_{\De}|=\sqrt{g(X_{\De},X_{\De})}$, where $X_{\De}$ is the orthogonal projection of $X$ on $\De$ (see \cite{J-1}). In this case we have
\[R(X,\bJ X,\bJ X,X)=\phi(x,|X_{\De}|)\] 
where $\phi(x,t)=a(x)+b(x)t^2+c(x)t^4$ and
$a,b,c$ are smooth functions on $M$. Also $R=a\Pi+b\Phi+c\Psi$ for certain curvature tensors $\Pi,\Phi,\Psi\in \bigotimes^4\frak X^*(M)$ of K\"ahler type. Every QCH K\"ahler surface admits an opposite almost Hermitian structure $J$ such that the Ricci tensor of $(M,g,\bJ)$ is $J$-invariant and $(M,g,J)$ satisfies the second Gray condition:
\[R(X,Y,Z,W)-R(JX,JY,Z,W)=R(JX,Y,JZ,W)+R(JX,Y,Z,JW).\]
We denote by $\tau$ the scalar curvature of $(M,g)$, i.e., $\tau=\mathrm{tr}_g Ric$. The conformal scalar curvature $\kappa$ is defined by
\[\kappa = \tau - 6(|\te|^2+\delta \te).\] The tensor $W=W^+ + W^-$ is called the Weyl tensor and its components $W^+$, $W^-$ are called the self-dual and anti-self-dual Weyl tensors.

If the K\"ahler surface $(M, g, \bJ)$ admits a negative almost complex structure $J$ preserving the Ricci tensor $Ric$ and such that $W^-$ is degenerate with eigenvector  $\Om$ corresponding to the simple eigenvalue of $W^-$ or equivalently $J$ satisfies the second Gray condition of the curvature, then from \cite{J-1} it follows that
\begin{equation}\label{riemann}
	R= (\frac{\tau}{6} - \delta +\frac{\kappa}{12}) \Pi +(2\delta -\frac{\kappa}{2}) \Phi + \frac{\kappa}{2} \Psi,
\end{equation}
where $\delta$ is half of the difference between two eigenvalues of the Ricci tensor,
$\Pi$ is the standard K\"ahler tensor of constant holomorphic curvature, i.e.
\begin{align*}
	\Pi(X, Y, Z, U) &= \frac{1}{4} (g(Y, Z)g(X, U) - g(X, Z)g(Y, U)+g(\bJ Y, Z)g(\bJ X, U)\\
	&- g(\bJ X, Z)g(\bJ Y, U) - 2g(\bJ X, Y )g(\bJ Z, U)),
\end{align*}
the tensor $\Phi$ is defined by the following relation
\begin{align*}
	\Phi(X, Y, Z, U) &= \frac{1}{8} (g(Y, Z)h(X, U) - g(X, Z)h(Y, U) +g(X, U)h(Y, Z)- g(Y, U)h(X, Z)\\
	&+ g(\bJ Y, Z)h(\bJ X, U) -g(\bJ X, Z)h(\bJ Y, U) + g(\bJ X, U)h(\bJ Y, Z)\\
	&-g(\bJ Y, U)h(\bJ X, Z)-2g(\bJ X, Y )h(\bJ Z, U) - 2g(\bJ Z, U)h(\bJ X, Y ))
\end{align*}
and finally
\[\Psi(X, Y, Z, U) = -h(\bJ X, Y )h(\bJ Z, U) = -(h_{\bJ} \otimes h_{\bJ} )(X, Y, Z, U),\]
where $h_{\bJ} (X,Y)=h(\bJ X,Y)$, $h=g \circ (p_{\DE} \times p_{\DE})$, $p_{\DE}$ is the orthogonal projection on $\DE$.

In \cite{J-1} it is proved that every QCH K\"ahler surface $(M,g,\bJ)$ with opposite almost Hermitian structure $J$, which is Hermitian and locally conformally K\"ahler is of Calabi type or is orthotoric or is hyperk\"ahler.    If $J$ is determined by a foliation into complex curves then $(M,g,\bJ)$ is of Calabi type.

By a {\it generalized Calabi type K\"ahler surface} we maean a QCH K\"ahler surface such that the opposite almost Hermitian structure $J$ is determined by a complex foliation into complex curves and is Hermitian. It just means that $\De$ is integrable and $J$ is also integrable.

In the sequel we shall only consider generalized Calabi type K\"ahler surfaces $(M,g,\bJ)$ whose opposite Hermitian structure $J$ is not K\"ahler.

\section{Properties of Hermitian surfaces with Hermitian Ricci tensor and K\"ahler natural opposite structure}

Let $(M,g,J)$ be a Hermitian 4-manifold. Assume that  $|\n J| > 0$ on $M$. Then from \cite{J-5} there exists a global oriented orthonormal basis $\{E_3,E_4\}$ of the nullity foliation $\De$, such that for any local orthonormal oriented basis $\{E_1,E_2\}$ of $\DE$ 
\[\n \0=\al(\te_1\otimes\Phi+\te_2\otimes\Psi),\]
where $\Phi=\te_1 \w \te_3 -\te_2 \w \te_4, \Psi=\te_1 \w \te_4+\te_2 \w \te_3$, $\al=\pm \frac1{2\sqrt{2}}|\n J|$ and $\{\te_1,\te_2,\te_3,\te_4\}$ is a cobasis dual to $\{E_1,E_2,E_3,E_4\}$. Moreover $\delta\0=-2\al \te_3$ and $\te=-\al \te_4$.
A frame $\{E_1, E_2, E_3, E_4\}$ we call a special frame for $(M, g, J)$.
If the natural opposite almost Hermitian structure is Hermitian then $\De$ is totally geodesic (see \cite{J-4}).
If $(M,g,J)$ is a Hermitian surface with $|\n J|> 0$ on $M$, then the distributions $\De,\DE$ define a natural opposite almost Hermitian structure $\bar{J}$ on $M$. This structure is defined as follows $\bar J|_{\De}=-J|_{\De},\bar J|_{\DE}=J|_{\DE}$. In the special basis we just have: $\bar J E_1=E_2, \bar J  E_3=-E_4$.

Assume that $(M,g,J)$ is a Hermitian surface with Hermitian Ricci tensor and K\"ahler natural opposite structure $\bJ$ and assume that $\te_1=fdx,\te_2=fdy$. We can always assume that $\te_1=fdx,\te_2=fdy$, where  $(x,y,z,t)$  is a local foliated coordinate system i.e. $\De=\mathrm{ker} dx\cap \mathrm{ker} dy$ (see \cite{J-M}). Then from \cite{J-5} we have 
\begin{equation}\label{nawiasy}
	\begin{aligned}
	[E_1,E_4]&=-\frac\al 2 E_1+E_2\ln\al E_3,\\
	[E_2,E_4]&=-\frac\al 2E_2-E_1\ln\al E_3,\\
	[E_1,E_3]&=-E_2\ln\al E_4,\\
	[E_2,E_3]&=E_1\ln\al E_4,\\
	[E_3,E_4]&=-(-E_4\ln\al+\al)E_3,\\
	[E_1,E_2]&=\G^1_{12}E_1-\G^2_{21}E_2+\al E_3.
	\end{aligned}
\end{equation}
We also have
\begin{equation}\label{dteta}
	\begin{aligned}
	d\te_1&=\G^2_{11}\te_1\w\te_2+\frac12\al\te_1\w\te_4,\\
	d\te_2&=-\G^1_{22}\te_1\w\te_2+\frac12\al\te_2\w\te_4,\\
	d\te_3&=-\al\te_1\w\te_2-E_2\ln\al\te_1\w\te_4+E_1\ln\al\te_2\w\te_4+(-E_4\ln\al+\al)\te_3\w\te_4,\\
	d\te_4&=E_2\ln\al\te_1\w\te_3-E_1\ln\al\te_2\w\te_3.
	\end{aligned}
\end{equation}
From \cite{J-M} we conclude that
\begin{equation}\label{omegi}
	\begin{aligned}
	\om^3_1 &= \om^2_4 = -\frac{1}{2} \al \te_2,\\
	\om^1_4 &= \om^2_3 = -\frac{1}{2} \al \te_1,\\
	\om^2_1 &= \G^2_{11} \te_1 - \G^1_{22} \te_2 - \frac{\al}{2} \te_3,\\
	\om^3_4 &= E_2 \ln \al \te_1 - E_1 \ln \al \te_2 + (E_4 \ln \al - \al) \te_3.
	\end{aligned}
\end{equation}
Now using (\ref{nawiasy}) and (\ref{omegi}), we obtain
\begin{equation}\label{R12}
	\begin{aligned}
		(R(E_1, E_2) E_1, E_2) &= d \om^2_1 (E_1, E_2) + (\om^2_3 \w \om^3_1 + \om^2_4 \w \om^4_1)(E_1, E_2)\\
		&= E_1 \om^2_1 (E_2) - E_2 \om^2_1 (E_1) - \om^2_1 ([E_1, E_2]) + \frac{1}{2} \al^2\\
		&= E_1 \om^2_1 (E_2) - E_2 \om^2_1 (E_1) - \om^2_1 (\G^1_{22} E_1 - \G^2_{21} E_2 + \al E_3) + \frac{1}{2} \al^2\\
		&= -E_1 \G^1_{22} - E_2 \G^2_{11} + (\G^2_{11})^2 + (\G^1_{22})^2 + \al^2
	\end{aligned}
\end{equation}
	and
\begin{equation}\label{R34}
	\begin{aligned}
		(R(E_3, E_4)E_3, E_4) &= d \om^4_3 (E_3, E_4) + (\om^4_1 \w \om^1_3 + \om^4_2 \w \om^2_3)(E_3, E_4)\\
		&= E_3 \om^4_3 (E_4) - E_4 \om^4_3 (E_3) - \om^4_3 ([E_3, E_4])\\
		&= E_3 \om^4_3 (E_4) - E_4 \om^4_3 (E_3) - \om^4_3 ((E_4 \ln \al - \al) E_3).
	\end{aligned}
\end{equation}
This implies that the sectional curvature
\begin{equation}\label{k12}
	\begin{aligned}
K(E_1,E_2)&=R(E_1, E_2, E_2, E_1) = -(R(E_1, E_2) E_1, E_2)\\
&= E_1 \G^1_{22} + E_2 \G^2_{11} - (\G^2_{11})^2 - (\G^1_{22})^2 - \al^2
	\end{aligned}
\end{equation}
and
\begin{equation}\label{k34}
	\begin{aligned}
K(E_3,E_4)&=R(E_3, E_4, E_4, E_3) = -(R(E_3, E_4) E_3, E_4)\\
&= -E_3 \om^4_3 (E_4) + E_4 \om^4_3 (E_3) + \om^4_3 ((E_4 \ln \al - \al) E_3).
	\end{aligned}
\end{equation}
For
	\[\te = -\al \te_4\]
we have
	\[\vert \te \vert^2 = \al^2 \]
and using (\ref{omegi}), we obtain
	\begin{align*}
		\delta \te &= -E_i \lrcorner \n _{E_i} \te = -E_i \lrcorner (E_i(-\al) \te_4 - \al \n _{E_i} \te_4 = -E_i \lrcorner (E_i(-\al) \te_4 - \al \sum_j \om^j_4 (E_i) \te_j)\\
		&= E_4 \al + \al (\om^1_4(E_1) + \om^2_4(E_2) +\om^3_4(E_3)) = 2 (E_4 \al -\al^2).  
	\end{align*}
Hence
\begin{equation}\label{teta2}
	\vert \te \vert^2 +\delta \te = 2 E_4 \al -\al^2.
\end{equation}
Now using (\ref{nawiasy}) and (\ref{omegi}), we get the Ricci tensor
	\begin{align*}
		Ric(E_1, E_1) &= \sum_{i=1}^{4} (R(E_i,E_1)E_1,E_i) = \sum_{i=1}^{4} (d \om^i_1 (E_i,E_1)+\om^i_k \w \om^k_1 (E_i,E_1))\\
		&= E_2 \G^2_{11} + E_1 \G^1_{22} - (\G^2_{11})^2 - (\G^1_{22})^2 +E_4 \al - \frac{3}{2} \al^2,\\
		Ric(E_1, E_2) &= \sum_{i=1}^{4} (R(E_i,E_1)E_2,E_i) = \sum_{i=1}^{4} (d \om^i_2 (E_i,E_1)+\om^i_k \w \om^k_2 (E_i,E_1))\\
		&= E_3 \frac{\al}{2},\\
		Ric(E_1, E_3) &= \sum_{i=1}^{4} (R(E_i,E_1)E_3,E_i) = \sum_{i=1}^{4} (d \om^i_3 (E_i,E_1)+\om^i_k \w \om^k_3 (E_i,E_1))\\
		&= \frac{3}{2} E_2 \al - E_4(E_2 \ln \al) - E_2 \ln \al (E_4 \ln \al),\\
		Ric(E_1, E_4) &= \sum_{i=1}^{4} (R(E_i,E_1)E_4,E_i) = \sum_{i=1}^{4} (d \om^i_4 (E_i,E_1)+\om^i_k \w \om^k_4 (E_i,E_1))\\
		&= E_3(E_2 \ln \al) - E_1 (E_4 \ln \al - \al),\\
\end{align*}

\begin{equation}\label{Ricci}
	\begin{aligned}
Ric(E_2, E_2) &= \sum_{i=1}^{4} (R(E_i,E_2)E_2,E_i) = \sum_{i=1}^{4} (d \om^i_2 (E_i,E_2)+\om^i_k \w \om^k_2 (E_i,E_2))\\
&= E_2 \G^2_{11} + E_1 \G^1_{22} - (\G^2_{11})^2 - (\G^1_{22})^2 +E_4 \al - \frac{3}{2} \al^2 = Ric(E_1,E_1),\\
Ric(E_2, E_3) &= \sum_{i=1}^{4} (R(E_i,E_2)E_3,E_i) = \sum_{i=1}^{4} (d \om^i_3 (E_i,E_2)+\om^i_k \w \om^k_3 (E_i,E_2))\\
&= -\frac{3}{2} E_1 \al + E_4(E_1 \ln \al) + E_1 \ln \al (E_4 \ln \al),\\
Ric(E_2, E_4) &= \sum_{i=1}^{4} (R(E_i,E_2)E_4,E_i) = \sum_{i=1}^{4} (d \om^i_4 (E_i,E_2)+\om^i_k \w \om^k_4 (E_i,E_2)) \\
&= - E_3(E_1 \ln \al) - E_2 (E_4 \ln \al - \al),\\
Ric(E_3, E_3) &= \sum_{i=1}^{4} (R(E_i,E_3)E_3,E_i) = \sum_{i=1}^{4} (d \om^i_3 (E_i,E_3)+\om^i_k \w \om^k_3 (E_i,E_3))\\
&= - E_4(E_4 \ln \al - \al) - (E_4 \ln \al - \al)^2 +\al (E_4 \ln \al - \al) + \frac{\al^2}{2},\\
Ric(E_3, E_4) &= \sum_{i=1}^{4} (R(E_i,E_3)E_4,E_i) = \sum_{i=1}^{4} (d \om^i_4 (E_i,E_3)+\om^i_k \w \om^k_4 (E_i,E_3)) \\
&= E_3 \al,\\
Ric(E_4, E_4) &= \sum_{i=1}^{4} (R(E_i,E_4)E_4,E_i) = \sum_{i=1}^{4} (d \om^i_4 (E_i,E_4)+\om^i_k \w \om^k_4 (E_i,E_4))\\
&= - E_4(E_4 \ln \al - \al) - (E_4 \ln \al - \al)^2 +\al (E_4 \ln \al - \al) + \frac{\al^2}{2} \\
&= Ric (E_3,E_3).\\
\end{aligned}
\end{equation}

\section{A description of the generalized Calabi type K\"ahler surfaces which are not of Calabi type}

The semi-symmetric generalized Calabi type K\"ahler surfaces are classified in \cite{J-5} (see also \cite{J-2}) and the remaining cases are classified in \cite{J-M}.
For the convenience of the reader, we recall the main results from those papers. First we have the semi-symmetric case.

\begin{theorem}\label{tw:semi-sym}
	Let  $U\subset \mathbb R^2$ and let $g_{\Sigma}=h^2(dx^2+dy^2)$ be a Riemannian metric on $U$, where $h:U \ra \mathbb R$ is a positive function $h=h(x,y)$.  Let $\om_{\Sigma}=h^2dx \w dy$ be the volume form of $\Sigma=(U,g)$. Let $M=U \times N$, where $N=\{(z,t) \in \mathbb R^2: z < 0\}$. Define a metric $g$ on $M$ by $g(X,Y)=z^2g_{\Sigma}(X,Y)+ \te_3(X)\te_3(Y)+ \te_4(X)\te_4(Y)$, where
	\begin{align*}
		\te_3&=-\frac{z}{2}dt+(\cos \frac{1}{2}tH(x,y)+zl_2(x,y))dx\\
		&+(-\sin \frac{1}{2}tH(x,y)+zn_2(x,y))dy,\\
		\te_4&=dz-\sin \frac{1}{2}tH(x,y)dx-\cos \frac{1}{2}tH(x,y)dy
	\end{align*}
	and the function $H$ satisfies the equation $\Delta \ln H=(\ln H)_{xx}+(\ln H)_{yy}=2h^2$ on $U$, $l_2=-(\ln H)_y, n_2=(\ln H)_x$. Then $(M,g)$ admits a K\"ahler structure $\bar J$ with the K\"ahler form $\bar \Omega =z^2\om_{\Sigma}+\te_4\w \te_3$  and a Hermitian structure $J$ with the K\"ahler form $\Omega =z^2\om_{\Sigma}+\te_3\w \te_4$. The Ricci tensor of $(M,g)$ is $J$-invariant and $J$ is not locally conformally K\"ahler. The Lee form of $(M,g,J)$ is $\te=-\al\te_4$, where $\al=-\frac{2}{z}$. The scalar curvature of $(M,g)$ is $\tau=-\frac{2 \Delta \ln h + 8h^2}{z^2 h^2}$ and is equal to conformal curvature $\kappa$ of $(M,g,J)$. $(M,g,\bar \Omega)$ is a semi-symmetric QCH K\"ahler surface. 
\end{theorem}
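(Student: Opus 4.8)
The plan is to verify directly that the data $(M,g,\Om,\bO)$ written down in the statement realise the situation studied in the preceding section, and then to read off every assertion either from the formulas recorded there or from the classification of \cite{J-5}.

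\textbf{Step 1: a natural orthonormal coframe.} On $M=U\times N$ set $\te_1=zh\,dx$, $\te_2=zh\,dy$, together with the one-forms $\te_3,\te_4$ given in the statement. Then $g=\te_1^2+\te_2^2+\te_3^2+\te_4^2$, so $\{\te_1,\te_2,\te_3,\te_4\}$ is orthonormal; let $\{E_1,E_2,E_3,E_4\}$ be the dual frame. Since $\te_1,\te_2$ are functional multiples of $dx,dy$, one has $\De=\ker dx\cap\ker dy=\ker\te_1\cap\ker\te_2=\mathrm{span}\{E_3,E_4\}$ and $\DE=\mathrm{span}\{E_1,E_2\}$, and $\De$ is the prescribed foliation. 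As $z<0$, the function $\al$ produced below is nowhere zero, so a special frame exists on all of $M$.

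\textbf{Step 2: the Levi--Civita connection and $\al=-2/z$.} Compute $d\te_1,\dots,d\te_4$ from the explicit formulas and solve the first structure equations $d\te_i+\om^i_j\w\te_j=0$, $\om^i_j=-\om^j_i$, for the connection one-forms $\om^i_j$. Three observations do the work: the $dz\w dx$ part of $d\te_1$ (and the $dz\w dy$ part of $d\te_2$) equals $-\tfrac1z\,\te_1\w\te_4+(\cdots)\te_1\w\te_2$, forcing $\tfrac12\al=-\tfrac1z$, i.e. $\al=-\tfrac2z$; the $dx\w dy$ part of $d\te_3$ receives the term $2zh^2\,dx\w dy$, which matches the coefficient $-\al$ of $\te_1\w\te_2$ in (\ref{dteta}) exactly because $-(l_2)_y+(n_2)_x=(\ln H)_{xx}+(\ln H)_{yy}=\Delta\ln H=2h^2$ by hypothesis; and the remaining $\sin\tfrac t2H,\cos\tfrac t2H$ cross terms in $d\te_3,d\te_4$ reorganise into the $\te_i\w\te_3,\te_i\w\te_4$ parts with the coefficients $E_1\ln\al,E_2\ln\al$ produced by $\al=-\tfrac2z$ (note $E_4z=1$, $E_3z=0$, while $E_1z,E_2z$ account for the $\sin,\cos$ terms). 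The outcome is precisely (\ref{dteta})--(\ref{omegi}), and hence also the brackets (\ref{nawiasy}), with $\al=-\tfrac2z$ and with $\G^2_{11},\G^1_{22}$ certain combinations of $h$ (and $H$); thus $\{E_1,E_2,E_3,E_4\}$ is a special frame for $\m$.

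\textbf{Step 3: Hermitian and K\"ahler structures, Ricci, and the scalar curvatures.} Once (\ref{omegi}) holds one gets $\n\Om=\al(\te_1\otimes\Phi+\te_2\otimes\Psi)$; equivalently, by (\ref{nawiasy}), the complex distribution $T^{(1,0)}M=\mathrm{span}\{E_1-iE_2,\,E_3-iE_4\}$ is involutive, so $J$ (the almost complex structure with $\Om(X,Y)=g(JX,Y)$, whence $\Om=z^2\om_{\Sigma}+\te_3\w\te_4$) is Hermitian. A short computation with (\ref{omegi}) gives $\n(\te_1\w\te_2)=\n(\te_3\w\te_4)=\tfrac12\n\Om$, so $\n\bO=\n(\te_1\w\te_2)-\n(\te_3\w\te_4)=0$ for $\bO=z^2\om_{\Sigma}+\te_4\w\te_3$, i.e. $\bJ$ is K\"ahler. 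The Lee form is $\te=-\al\te_4$; since $d\al=\tfrac{2}{z^2}dz$ has nonzero $dx,dy$--components, $d\te\neq0$, so $J$ is not locally conformally K\"ahler and $(M,g,\bJ)$ is of generalized Calabi type but not of Calabi type. Substituting $\al=-\tfrac2z$ into (\ref{Ricci}) gives $Ric(E_1,E_2)=\tfrac12E_3\al=0$, $Ric(E_3,E_4)=E_3\al=0$, $Ric(E_1,E_1)=Ric(E_2,E_2)$, $Ric(E_3,E_3)=Ric(E_4,E_4)$, together with $Ric(E_1,E_3)=Ric(E_2,E_4)$ and $Ric(E_1,E_4)=-Ric(E_2,E_3)$, i.e. $Ric$ is $J$-invariant. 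By (\ref{teta2}) and $E_4\al=\tfrac{2}{z^2}=\tfrac12\al^2$ we obtain $|\te|^2+\de\te=2E_4\al-\al^2=0$, hence $\ka=\tau-6(|\te|^2+\de\te)=\tau$. Finally $\tau=\mathrm{tr}_gRic=2\,Ric(E_1,E_1)+2\,Ric(E_3,E_3)$; inserting $\al=-\tfrac2z$ and the Step~2 expressions for $\G^2_{11},\G^1_{22},E_i\ln\al$, the $H$--dependent contributions cancel and one is left with $\tau=-\frac{2\Delta\ln h+8h^2}{z^2h^2}$. That $(M,g,\bO)$ is a QCH K\"ahler surface — indeed the full curvature tensor, reconstructed from the components in (\ref{R12})--(\ref{R34}) and (\ref{Ricci}) using the K\"ahler symmetries of $\bJ$, has the form (\ref{riemann}) — and that it is semi-symmetric are the results proved in \cite{J-5} (see also \cite{J-2}).

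\textbf{Expected main obstacle.} The only genuinely delicate part is Step~2 together with the final scalar-curvature computation: one must carry the $\sin\tfrac t2H,\cos\tfrac t2H$ cross terms through $d\te_3,d\te_4$, through the connection one-forms, and through (\ref{Ricci}), and check that the prescriptions $\Delta\ln H=2h^2$, $l_2=-(\ln H)_y$, $n_2=(\ln H)_x$ are exactly what makes (\ref{dteta})--(\ref{omegi}) take their stated form and what forces $H$ to disappear from $\tau$. Everything afterwards is bookkeeping with the formulas of the preceding section.
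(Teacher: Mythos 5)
Your plan is sound and is essentially the approach this paper takes: the theorem itself is stated here without proof (it is recalled from \cite{J-5}, \cite{J-2} and \cite{J-M}), and your Steps 2--3 reproduce exactly the special-frame, structure-equation computations of Section 3 and of the proof of the subsequent semi-symmetric description theorem, where $f=zh$, $\G^2_{11}$, $\G^1_{22}$, the Ricci tensor and $\tau=\kappa$ are obtained by the same bookkeeping with $\al=-\frac2z$. Deferring the QCH and semi-symmetry assertions to \cite{J-5} (see also \cite{J-2}) is likewise what the paper does, so apart from the computational checking you yourself flag as outstanding, there is no gap.
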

Next we recall the remaining cases.
\begin{theorem}\label{tw:tan}
	Let  $U\subset \mathbb R^2$ and let $g_{\Sigma}=h^2(dx^2+dy^2)$ be a Riemannian metric on $U$, where $h:U \ra \mathbb R$ is a positive function $h=h(x,y)$.  Let $\om_{\Sigma}=h^2dx \w dy$ be the volume form of $\Sigma=(U,g)$. Let $M=U \times N$, where $N=\{(z,t) \in \mathbb R^2: \vert z \vert <\frac{\pi}{2 \vert a \vert}\}$. Define a metric $g$ on $M$ by $g(X,Y)=(\cos az)^2g_{\Sigma}(X,Y)+ \te_3(X)\te_3(Y)+ \te_4(X)\te_4(Y)$, where
	\begin{align*}
		\te_3&=\sin 2azdt-(\cos 2at\cos 2azH(x,y)+\sin 2azl_2(x,y))dx\\
		&-(-\sin 2at\cos 2azH(x,y)+\sin 2azn_2(x,y))dy,\\
		\te_4&=dz-\sin 2atH(x,y)dx-\cos 2atH(x,y)dy
	\end{align*}
	and the function $H$ satisfies the equation  $\Delta \ln H=(\ln H)_{xx}+(\ln H)_{yy}=2a^2 h^2-4a^2 H^2$ on $U$, $l_2=-\frac{1}{2a}(\ln H)_y, n_2=\frac{1}{2a}(\ln H)_x$. Then $(M,g)$ admits a K\"ahler structure $\bar J$ with the K\"ahler form $\bar \Omega =(\cos az)^2\om_{\Sigma}+\te_4\w \te_3$  and a Hermitian structure $J$ with the K\"ahler form $\Omega =(\cos az)^2\om_{\Sigma}+\te_3\w \te_4$. The Ricci tensor of $(M,g)$ is $J$-invariant and $J$ is not locally conformally K\"ahler. The Lee form of $(M,g,J)$ is $\te=-\al\te_4$, where $\al=2a\tan az$.
\end{theorem}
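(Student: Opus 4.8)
The plan is to run exactly the same verification scheme as in the semi-symmetric case (Theorem \ref{tw:semi-sym}), only now with the ``trigonometric'' warping factor $\cos az$ in place of the ``flat'' factor $z$. First I would introduce the orthonormal coframe $\te_1=\cos az\, h\, dx$, $\te_2=\cos az\, h\, dy$, together with $\te_3,\te_4$ as written, and compute the exterior derivatives $d\te_1,\dots,d\te_4$. The key point is to check that these $d\te_i$ have precisely the form (\ref{dteta}) with the specified value $\al=2a\tan az$; in particular one must verify that the coefficient of $\te_1\w\te_2$ in $d\te_3$ equals $-\al$, that $d\te_4$ has no $\te_1\w\te_2$, $\te_3\w\te_4$, $\te_1\w\te_4$, or $\te_2\w\te_4$ components, and that $E_3\ln\al$, $E_4\ln\al$ come out correctly (here $E_4=\p_z+\dots$ so $E_4\ln\al=2a/\sin 2az-\dots$, and the PDE $\Delta\ln H=2a^2h^2-4a^2H^2$ is exactly what is needed to kill the unwanted cross terms). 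Once (\ref{dteta}) is matched, the connection forms (\ref{omegi}) and brackets (\ref{nawiasy}) hold verbatim, so by \cite{J-5} the structure $J$ with $\Om=(\cos az)^2\om_\Sigma+\te_3\w\te_4$ is Hermitian with $\n\Om=\al(\te_1\otimes\Phi+\te_2\otimes\Psi)$, hence $\te=-\al\te_4$, and $\bJ$ with $\bO=(\cos az)^2\om_\Sigma+\te_4\w\te_3$ is the natural opposite structure.

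Next I would verify that $\bJ$ is actually K\"ahler, i.e. $d\bO=0$. Using $d\te_4\w\te_3+\te_4\w d\te_3$ from the computation above together with $d((\cos az)^2\om_\Sigma)=-2a\sin az\cos az\,dz\w\om_\Sigma+(\cos az)^2 d\om_\Sigma$ and the relation $d\om_\Sigma=0$ on the surface $\Sigma$, the claim reduces to a cancellation that again uses $l_2=-\frac1{2a}(\ln H)_y$, $n_2=\frac1{2a}(\ln H)_x$ and the PDE for $H$. Then I would check $J$-invariance of the Ricci tensor: by the results of Section 3, $Ric$ in the special frame is given by (\ref{Ricci}), and $J$-invariance is equivalent to $Ric(E_1,E_2)=Ric(E_3,E_4)=0$ together with the block structure already visible in (\ref{Ricci}); since $Ric(E_1,E_2)=\tfrac12 E_3\al$ and $Ric(E_3,E_4)=E_3\al$, it suffices to observe $E_3\al=0$, which holds because $\al=2a\tan az$ depends only on $z$ and $E_3$ is tangent to $N$ but annihilates $dz$ — more precisely one reads off from $\te_3,\te_4$ that $E_3$ has no $\p_z$-component. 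Finally, $J$ is not locally conformally K\"ahler because $d\te=-d\al\w\te_4=-\al'(z)\,dz\w\te_4\neq0$ as $\al'=2a^2\sec^2 az\neq0$, so $d\te\neq0$; equivalently $\al$ is nonconstant along $E_4$.

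The main obstacle I expect is the bookkeeping in matching $d\te_3$ and $d\te_4$ to (\ref{dteta}): the terms $\cos 2at\cos 2az\,H\,dx$ etc. produce, upon differentiation, both a $dz$-factor (from $\p_z\cos 2az=-2a\sin 2az$) and $dx\w dy$-factors (from $H_x,H_y$, $l_{2,y}$, $n_{2,x}$), and one must see that after re-expressing everything in the coframe $\{\te_1,\te_2,\te_3,\te_4\}$ the coefficient identities force exactly the stated PDE $\Delta\ln H=2a^2h^2-4a^2H^2$ and the stated $\al$. This is the step where the $-4a^2H^2$ correction term (absent in the semi-symmetric case) enters, and getting its sign and coefficient right is the delicate part; everything downstream (connection forms, Ricci tensor, Lee form) is then a mechanical consequence of the framework recalled in Sections 2 and 3. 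One should also note that the coordinate domain $|z|<\pi/(2|a|)$ is exactly where $\cos az>0$, so that $g$ is a genuine Riemannian metric there.
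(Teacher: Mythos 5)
Note first that this paper does not actually prove Theorem \ref{tw:tan}: it is recalled from \cite{J-M}, so the only thing to compare your proposal with is the Section 3 framework you invoke. Your overall scheme --- take the orthonormal coframe $\te_1=\cos az\,h\,dx$, $\te_2=\cos az\,h\,dy$, $\te_3,\te_4$ as given, verify that the $d\te_i$ have exactly the form (\ref{dteta}) with $\al=2a\tan az$ (this is indeed where the PDE $\Delta\ln H=2a^2h^2-4a^2H^2$ enters), and then read off (\ref{omegi}), (\ref{nawiasy}) and (\ref{Ricci}) --- is the right verification strategy and matches how these metrics are built in \cite{J-M}. However, as written the proposal has genuine gaps in exactly the parts of the statement that are not ``mechanical''.

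First, your reduction of $J$-invariance of $Ric$ to ``$Ric(E_1,E_2)=Ric(E_3,E_4)=0$ plus the block structure already visible in (\ref{Ricci})'' is not correct: (\ref{Ricci}) also produces the mixed components $Ric(E_1,E_3)$, $Ric(E_1,E_4)$, $Ric(E_2,E_3)$, $Ric(E_2,E_4)$, which are generically nonzero, and since $Ric$ is automatically $\bJ$-invariant once $\bJ$ is K\"ahler, $J$-invariance forces all four of them to vanish; this does not follow from $E_3\al=0$. They do vanish here, but only after a computation using $E_1\ln\al=k\,\al'/\al$, $E_2\ln\al=m\,\al'/\al$, $\p_t k=2am$, $\p_z m=\tfrac{\al}{2}m$, $E_4\ln\al=2a/\sin 2az$ and the identity $\al''=\al\al'$ satisfied by $\al=2a\tan az$ --- i.e.\ four further nontrivial cancellations you must actually check. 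Second, $d\bO=0$ only shows that $\bO$ is an almost K\"ahler form; you still need integrability of $\bJ$. The efficient route, once (\ref{omegi}) is established, is to note that $\om^3_1=\om^2_4$ and $\om^1_4=\om^2_3$ give $\n\bO=0$ directly, which yields the K\"ahler property at once and makes the separate $d\bO=0$ computation superfluous (the same connection forms also give $\n\Om=\al(\te_1\otimes\Phi+\te_2\otimes\Psi)$, which is what justifies the appeal to \cite{J-5} for $J$ being Hermitian with Lee form $\te=-\al\te_4$). Third, your formula $d\te=-d\al\w\te_4$ omits the term $-\al\,d\te_4$, which by (\ref{dteta}) equals $-\al(E_2\ln\al\,\te_1\w\te_3-E_1\ln\al\,\te_2\w\te_3)\neq 0$; the conclusion $d\te\neq 0$ (hence $J$ not locally conformally K\"ahler) survives, since both contributions are proportional to $\al'\neq0$ and to $H>0$, but the computation as stated is wrong and should be redone in the coframe $\{\te_i\}$.
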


\begin{theorem}\label{tw:coth}
	Let  $U\subset \mathbb R^2$ and let $g_{\Sigma}=h^2(dx^2+dy^2)$ be a Riemannian metric on $U$, where $h:U \ra \mathbb R$ is a positive function $h=h(x,y)$.  Let $\om_{\Sigma}=h^2dx \w dy$ be the volume form of $\Sigma=(U,g)$. Let $M=U \times N$, where $N=\{(z,t) \in \mathbb R^2: z < 0\}$. Define a metric $g$ on $M$ by $g(X,Y)=(\sinh az)^2g_{\Sigma}(X,Y)+ \te_3(X)\te_3(Y)+ \te_4(X)\te_4(Y)$, where
	\begin{align*}
		\te_3&=\sinh 2azdt-(-\sin 2at\cosh 2azH(x,y)+\sinh 2azl_2(x,y))dx\\
		&-(\cos 2at\cosh 2azH(x,y)+\sinh 2azn_2(x,y))dy,\\
		\te_4&=dz-\cos 2atH(x,y)dx-\sin 2atH(x,y)dy
	\end{align*}
	and the function $H$ satisfies the equation  $\Delta \ln H=(\ln H)_{xx}+(\ln H)_{yy}=2a^2 h^2+4a^2 H^2$ on $U$, $l_2=\frac{1}{2a}(\ln H)_y, n_2=-\frac{1}{2a}(\ln H)_x$. Then $(M,g)$ admits a K\"ahler structure $\bar J$ with the K\"ahler form $\bar \Omega =(\sinh az)^2\om_{\Sigma}+\te_4\w \te_3$  and a Hermitian structure $J$ with the K\"ahler form $\Omega =(\sinh az)^2\om_{\Sigma}+\te_3\w \te_4$. The Ricci tensor of $(M,g)$ is $J$-invariant and $J$ is not locally conformally K\"ahler. The Lee form of $(M,g,J)$ is $\te=-\al\te_4$, where $\al=-2a\coth az$.
\end{theorem}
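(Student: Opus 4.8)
The plan is to exhibit the orthonormal coframe underlying the metric $g$, to check that it is a special frame for a Hermitian structure whose natural opposite structure is K\"ahler, and then to harvest all the asserted properties from the structure equations of Section~3. Put $\te_1=(\sinh az)\,h\,dx$, $\te_2=(\sinh az)\,h\,dy$ and let $\te_3,\te_4$ be as in the statement. Then $g=\te_1^2+\te_2^2+\te_3^2+\te_4^2$, so $\{\te_i\}$ is orthonormal with dual frame $\{E_1,E_2,E_3,E_4\}$, and one reads off $\De=\ker dx\cap\ker dy=\mathrm{span}\{E_3,E_4\}$, $\DE=\mathrm{span}\{E_1,E_2\}$. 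Let $J$ and $\bJ$ be the orthogonal almost complex structures with $JE_1=E_2$, $JE_3=E_4$ and $\bJ E_1=E_2$, $\bJ E_3=-E_4$; their K\"ahler forms are $\0=\te_1\w\te_2+\te_3\w\te_4=(\sinh az)^2\om_\Sigma+\te_3\w\te_4$ and $\bar\0=\te_1\w\te_2+\te_4\w\te_3=(\sinh az)^2\om_\Sigma+\te_4\w\te_3$, and $\bJ$ is the natural opposite structure of $J$.

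The heart of the matter is the computation of $d\te_1,\dots,d\te_4$ in the foliated coordinates $(x,y,z,t)$. Using $l_2=\frac1{2a}(\ln H)_y$, $n_2=-\frac1{2a}(\ln H)_x$ and the equation $\Delta\ln H=2a^2h^2+4a^2H^2$, I expect the $d\te_i$ to come out precisely in the form (\ref{dteta}) with $\al=-2a\coth az$, the functions $\G^2_{11},\G^1_{22}$ being those dictated by $d\te_1,d\te_2$. Equivalently one obtains the brackets (\ref{nawiasy}) and the connection forms (\ref{omegi}); in particular $\n\0=\al(\te_1\otimes\Phi+\te_2\otimes\Psi)$ with $\Phi=\te_1\w\te_3-\te_2\w\te_4$, $\Psi=\te_1\w\te_4+\te_2\w\te_3$, which is the form $\n\0=a\otimes\Phi+\J a\otimes\J\Phi$ characterizing Hermitian surfaces. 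Hence $J$ is a Hermitian structure with $|\n J|=2\sqrt{2}\,|\al|>0$ and $\{E_i\}$ is a special frame for $(M,g,J)$. From (\ref{dteta}) one computes $d\bar\0=d(\te_1\w\te_2)+d(\te_4\w\te_3)=-\al\,\te_1\w\te_2\w\te_4+\al\,\te_1\w\te_2\w\te_4=0$, so $\bJ$ is K\"ahler; and from $d\0=2\te\w\0$ together with (\ref{dteta}) one reads off $\te=-\al\te_4$, $\al=-2a\coth az$. Since $\al$ is a non-constant function of $z$ alone, $d\te\neq0$, so $J$ is not locally conformally K\"ahler.

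It remains to show that the Ricci tensor of $(M,g)$ is $J$-invariant. For this I would substitute into (\ref{Ricci}). Since $\al$ depends only on $z$ and $E_3z=dz(E_3)=0$ (from the coframe, $dz$ equals $\te_4$ plus a combination of $\te_1$ and $\te_2$), we get $E_3\al=0$, hence $Ric(E_1,E_2)=E_3\frac\al2=0$ and $Ric(E_3,E_4)=E_3\al=0$; together with $Ric(E_1,E_1)=Ric(E_2,E_2)$ and $Ric(E_3,E_3)=Ric(E_4,E_4)$ already present in (\ref{Ricci}), the condition $Ric(JX,JY)=Ric(X,Y)$ reduces to the two identities $Ric(E_1,E_3)=Ric(E_2,E_4)$ and $Ric(E_1,E_4)=-Ric(E_2,E_3)$, which I expect to follow from a direct computation using the equation for $H$. (Alternatively one may verify that $R=a\Pi+b\Phi+c\Psi$ for curvature tensors of K\"ahler type, i.e. that $(M,g,\bJ)$ is a QCH K\"ahler surface, and then invoke the fact recalled in Section~2 that such a surface carries an opposite almost Hermitian structure preserving its Ricci tensor.)

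The step I expect to be the main obstacle is the verification that $d\te_3$ and $d\te_4$ take the form (\ref{dteta}): one must see that the $t$-dependent trigonometric coefficients and the $z$-dependent hyperbolic coefficients, together with the particular choices of $l_2,n_2$, conspire to leave only the prescribed components, the surviving relation among the $\te_1\w\te_2$-parts being exactly the equation $\Delta\ln H=2a^2h^2+4a^2H^2$. This is the point where the hypothesis on $H$ does its work and where this case genuinely differs from those of Theorems~\ref{tw:semi-sym} and~\ref{tw:tan}, whose defining equations are $\Delta\ln H=2h^2$ and $\Delta\ln H=2a^2h^2-4a^2H^2$. Once (\ref{dteta}) and (\ref{omegi}) are in hand, the K\"ahlerness of $\bJ$, the Lee form, and the Ricci identities are mechanical.
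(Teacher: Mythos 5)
Your overall strategy --- exhibit the orthonormal coframe $\te_1=\sinh(az)\,h\,dx$, $\te_2=\sinh(az)\,h\,dy$, $\te_3,\te_4$, verify the structure equations (\ref{dteta}) with $\al=-2a\coth az$, and then read off $d\bar{\Omega}=0$, the Lee form and the Ricci properties --- is the natural one and is consistent with how these metrics are obtained in \cite{J-M}; note that the present paper does not prove Theorem \ref{tw:coth} at all but recalls it from \cite{J-M}, so there is no in-paper proof to compare with. The genuine gap is that the two steps carrying all the content are only announced, not performed. First, you never verify that $d\te_3$ and $d\te_4$ actually take the form (\ref{dteta}); this is exactly where the choices $l_2=\frac{1}{2a}(\ln H)_y$, $n_2=-\frac{1}{2a}(\ln H)_x$ and the equation $\Delta\ln H=2a^2h^2+4a^2H^2$ must enter (they make the $\te_1\w\te_2$-coefficient of $d\te_3$ equal to $-\al$ and kill the unwanted components), and every later conclusion --- $d\bar{\Omega}=0$, $\te=-\al\te_4$, the curvature statements --- is conditional on it. Second, the $J$-invariance of $Ric$ is left as an expectation. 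Here a constructive remark: once (\ref{dteta}) is established, (\ref{omegi}) and hence (\ref{Ricci}) follow from the Cartan structure equations alone (this also disposes of the circularity worry that Section 3 assumes the very properties you are proving), and the required identities $Ric(E_1,E_3)=Ric(E_2,E_4)$ and $Ric(E_1,E_4)=-Ric(E_2,E_3)$ are then formal consequences of (\ref{Ricci}), the brackets (\ref{nawiasy}) and $E_3\al=0$; for instance $E_3E_1\ln\al=[E_3,E_1]\ln\al=E_2\ln\al\,E_4\ln\al$ cancels the difference $Ric(E_1,E_3)-Ric(E_2,E_4)$. So the PDE for $H$ is not needed a second time, but the verification of (\ref{dteta}) itself cannot be skipped: without it nothing in your write-up is proved.

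A smaller flaw: your justification that $J$ is not locally conformally K\"ahler is insufficient as stated. From $\te=-\al\te_4$ and (\ref{dteta}) one gets $d\te=-E_1\al\,\te_1\w\te_4-E_2\al\,\te_2\w\te_4-E_2\al\,\te_1\w\te_3+E_1\al\,\te_2\w\te_3$, and $E_1\al=k\al'$, $E_2\al=m\al'$ with $k,m$ proportional to $H$; if $H$ vanished identically one would have $d\te=0$ even though $\al$ is a nonconstant function of $z$. Thus non-closedness of $\te$ requires $H>0$ together with $\al'\neq 0$, not merely that $\al$ depends nontrivially on $z$.
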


\begin{theorem}\label{tw:tanh}
	Let  $U\subset \mathbb R^2$ and let $g_{\Sigma}=h^2(dx^2+dy^2)$ be a Riemannian metric on $U$, where $h:U \ra \mathbb R$ is a positive function $h=h(x,y)$.  Let $\om_{\Sigma}=h^2dx \w dy$ be the volume form of $\Sigma=(U,g)$. Let $M=U \times N$, where $N=\{(z,t) \in \mathbb R^2: z < 0\}$. Define a metric $g$ on $M$ by $g(X,Y)=(\cosh az)^2g_{\Sigma}(X,Y)+ \te_3(X)\te_3(Y)+ \te_4(X)\te_4(Y)$, where
	\begin{align*}
		\te_3&=\sinh 2azdt-(\cos 2at\cosh 2azH(x,y)+\sinh 2azl_2(x,y))dx\\
		&-(-\sin 2at\cosh 2azH(x,y)+\sinh 2azn_2(x,y))dy,\\
		\te_4&=dz-\sin 2atH(x,y)dx-\cos 2atH(x,y)dy
	\end{align*}
	and the function $H$ satisfies the equation  $\Delta \ln H=(\ln H)_{xx}+(\ln H)_{yy}=-2a^2 h^2+4a^2 H^2$ on $U$, $l_2=-\frac{1}{2a}(\ln H)_y, n_2=\frac{1}{2a}(\ln H)_x$. Then $(M,g)$ admits a K\"ahler structure $\bar J$ with the K\"ahler form $\bar \Omega =(\cosh az)^2\om_{\Sigma}+\te_4\w \te_3$  and a Hermitian structure $J$ with the K\"ahler form $\Omega =(\cosh az)^2\om_{\Sigma}+\te_3\w \te_4$. The Ricci tensor of $(M,g)$ is $J$-invariant and $J$ is not locally conformally K\"ahler. The Lee form of $(M,g,J)$ is $\te=-\al\te_4$, where $\al=-2a\tanh az$.
\end{theorem}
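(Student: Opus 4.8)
The statement is a direct-verification result: one is handed the explicit metric $g$ on $M=U\times N$ built from the data $(h,H,a)$ and must check that it carries the two almost complex structures with the asserted properties. The plan is to exhibit a special frame in the sense of Section~3, verify the structure equations (\ref{dteta}) by hand, and then read off all the remaining assertions from the formulas (\ref{omegi})--(\ref{Ricci}), which are consequences of (\ref{dteta}) alone; in effect one runs the normal-form part of the classification in \cite{J-M} backwards. Concretely, I would set $\te_1=(\cosh az)\,h\,dx$, $\te_2=(\cosh az)\,h\,dy$, and $\te_3,\te_4$ as in the statement, so that $g=\te_1^2+\te_2^2+\te_3^2+\te_4^2$; since $\cosh az>0$ and $\sinh 2az\neq 0$ on $N$ (where $z<0$), these forms are independent and $g$ is a genuine Riemannian metric, with $\De=\ker dx\cap\ker dy$ spanned by the dual fields $E_3,E_4$ and, solving the duality relations, $E_3=\tfrac1{\sinh 2az}\,\p_t$. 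I then define $\bJ,J$ by declaring $\bO=(\cosh az)^2\om_\Sigma+\te_4\w\te_3$ and $\Om=(\cosh az)^2\om_\Sigma+\te_3\w\te_4$ to be their K\"ahler forms; in this frame this reads $\bJ E_1=E_2,\ \bJ E_3=-E_4$ and $JE_1=E_2,\ JE_3=E_4$, so $\DE=\mathrm{span}\{E_1,E_2\}$ and $J$ is the opposite almost Hermitian structure attached to $\De$.

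Next I would compute $d\te_1,\dots,d\te_4$ and match them with (\ref{dteta}). Since no $dt$ occurs in $\te_1,\te_2$, the forms $d\te_1,d\te_2$ automatically take the prescribed shape --- this just \emph{defines} the functions $\G^2_{11},\G^1_{22}$ --- and the coefficient of $\te_1\w\te_4$ comes out exactly $\tfrac12\al$ with $\al=-2a\tanh az$, because $\tfrac{d}{dz}\ln\cosh az=a\tanh az$. The substantive part is $d\te_3$ and $d\te_4$: one expands using the trigonometric addition formulas, and the relations $l_2=-\tfrac1{2a}(\ln H)_y$, $n_2=\tfrac1{2a}(\ln H)_x$ are precisely what make the would-be $\te_1\w\te_2$ term of $d\te_4$ vanish and collapse the other cross terms, whereas the $\te_1\w\te_2$ coefficient of $d\te_3$ reduces to the required $-\al$ exactly when $H$ satisfies $\Delta\ln H=-2a^2h^2+4a^2H^2$. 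Once (\ref{dteta}) is checked, the connection forms (\ref{omegi}) are forced by $d\te_i=-\om^i_j\w\te_j$, $\om^i_j=-\om^j_i$, the Lie brackets (\ref{nawiasy}) follow, and hence so do the curvature identities (\ref{R12})--(\ref{Ricci}).

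From the connection forms I would then extract the remaining conclusions. One gets $\n\bO=0$, so $\bJ$ is K\"ahler; $\n\Om=\al(\te_1\otimes\Phi+\te_2\otimes\Psi)$, so $J$ is Hermitian; and since $\dl\Om=-2\al\te_3$, the Lee form is $\te=-\tfrac12\dl\Om\circ J=-\al\te_4$. That $J$ is not locally conformally K\"ahler follows because $d\te=-d\al\w\te_4-\al\,d\te_4\neq 0$: $d\al$ is a nonzero multiple of $dz=\te_4+\sin 2at\,H\,dx+\cos 2at\,H\,dy$, so $d\al\w\te_4$ has a nonzero $\te_1\w\te_4,\te_2\w\te_4$ component, which $\al\,d\te_4\in\mathrm{span}\{\te_1\w\te_3,\te_2\w\te_3\}$ cannot cancel. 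Finally, $Ric$ is $J$-invariant: from (\ref{Ricci}) the identities $Ric(E_1,E_1)=Ric(E_2,E_2)$ and $Ric(E_3,E_3)=Ric(E_4,E_4)$ hold automatically, $Ric(E_1,E_2)=E_3\tfrac\al2=0$ and $Ric(E_3,E_4)=E_3\al=0$ since $\al=\al(z)$ and $E_3=\tfrac1{\sinh 2az}\p_t$, and the two remaining identities $Ric(E_1,E_3)=Ric(E_2,E_4)$, $Ric(E_1,E_4)=-Ric(E_2,E_3)$ drop out after inserting the explicit $\al,h,H,l_2,n_2$ into (\ref{Ricci}) and using the equation for $H$ once more.

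The main obstacle will be the $d\te_3$ computation together with those last two Ricci identities: this is exactly where the particular profiles ($\sinh 2az$, $\cosh 2az$, $\sin 2at\,H$, $\dots$) and the elliptic equation $\Delta\ln H=-2a^2h^2+4a^2H^2$ are \emph{forced} rather than incidental, and it amounts to reproving the normal-form part of \cite{J-M} in the convenient direction. Everything downstream is then a mechanical application of the identities collected in Section~3.
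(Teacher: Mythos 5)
This theorem is one of the results the paper explicitly \emph{recalls} from \cite{J-M} (``for the convenience of the reader, we recall the main results from those papers''), so there is no in-paper proof to compare your argument against; the proof lives in the cited reference. Judged on its own, your direct-verification plan is sound and is exactly the natural way to run the paper's Section~3 machinery backwards, and it is consistent with the frame the paper itself uses later in the tanh case ($E_1=\frac1f\p_x+k\p_z+l\p_t$, $E_3=\frac1\beta\p_t$ with $\beta=\sinh 2az$, $f=h\cosh az$). Your two key structural claims check out: the choice $l_2=-\frac1{2a}(\ln H)_y$, $n_2=\frac1{2a}(\ln H)_x$ is precisely what kills the $\te_1\w\te_2$ component of $d\te_4$, and the $\te_1\w\te_2$ coefficient of $d\te_3$ reduces to $\sinh 2az\,(2aH^2-\frac1{2a}\Delta\ln H)$, which equals the required $-\al\cosh^2\!az\,h^2=a\sinh 2az\,h^2$ exactly when $\Delta\ln H=-2a^2h^2+4a^2H^2$. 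You also handle the one logical subtlety correctly: (\ref{dteta})--(\ref{Ricci}) were derived in Section~3 \emph{under} hypotheses that include the conclusions to be proved, so one must, as you do, verify the structure equations directly and then get (\ref{omegi}) by uniqueness of the Levi-Civita connection, after which $\n\bO=0$, $\n\Om=\al(\te_1\otimes\Phi+\te_2\otimes\Psi)$, $\te=-\al\te_4$ and the non-LCK argument ($d\al\w\te_4$ has $\te_i\w\te_4$ components, $i=1,2$, which $\al\,d\te_4\in\mathrm{span}\{\te_1\w\te_3,\te_2\w\te_3\}$ cannot cancel, since $a\neq0$ and $H>0$) all follow. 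Two minor remarks: the $J$-invariance of $Ric$ does require the extra check of the mixed components beyond (\ref{dteta}), as you note, but in fact $Ric(E_1,E_3)=Ric(E_2,E_4)$ and $Ric(E_1,E_4)=-Ric(E_2,E_3)$ already follow from the profile $\al=-2a\tanh az$, $\beta=\sinh 2az$ (using $(\ln\al)'=2a/\sinh 2az$) without invoking the PDE for $H$ again; and you should state explicitly the standing assumptions $a\neq0$ and $H>0$, which the theorem uses implicitly (nondegeneracy of $\te_3$ and of $d\te$).
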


Now we give a detailed description of such surfaces which are the generalized Calabi type K\"ahler surfaces and not of Calabi type.

First we have the semi-symmetric case. 

\begin{theorem}
	If metric $g$ is as in Theorem \ref{tw:semi-sym} ($\al = -\frac{2}{z}$), then
	\begin{enumerate}[label=\upshape(\roman*), leftmargin=*, widest=iiii]
		\item $\Delta \ln H=2h^2$ on $U$, \label{it:1}
		\item $\G^2_{11} = -\frac{\cos \frac{1}{2} t H}{z^2 h}-\frac{h_y}{z h^2}$ and $\G^1_{22} = -\frac{\sin \frac{1}{2} t H}{z^2 h}-\frac{h_x}{z h^2}$, \label{it:2}
		\item the Ricci tensor of $(M,g)$ represented in the frame $\{E_1,E_2,E_3,E_4\}$ is 
		\[
		Ric=\begin{bmatrix}
			-\frac{\Delta \ln h}{z^2 h^2} -\frac{4}{z^2} & 0 & 0 & 0 \\
			0 & -\frac{\Delta \ln h}{z^2 h^2} -\frac{4}{z^2} & 0 & 0\\
			0 & 0 & 0 & 0\\
			0 & 0 & 0 & 0
		\end{bmatrix},\]\label{it:3}
		\item the scalar curvature of $(M,g)$ is $\tau = -\frac{2 \Delta \ln h}{z^2 h^2} -\frac{8}{z^2}$,\label{it:4}
		\item the conformal scalar curvature of $(M,g)$ is $\kappa = -\frac{2 \Delta \ln h}{z^2 h^2} -\frac{8}{z^2}$,\label{it:5}
		\item the Weyl tensor \[W^- =
		\begin{bmatrix}
			-\frac{\Delta \ln h}{3z^2 h^2} -\frac{4}{3z^2} & 0 & 0 \\
			0 & \frac{\Delta \ln h}{6z^2 h^2} +\frac{2}{3z^2} & 0\\
			0 & 0 & \frac{\Delta \ln h}{6z^2 h^2} +\frac{2}{3z^2}  
		\end{bmatrix}\] in the frame $\{\Om, \phi, \psi \}$, where $\{\phi, \psi \}$ is any orthonormal basis of $LM$,\label{it:6}
		\item the sectional curvature of $(M,g)$ of $\DE$ is
		$K(E_1,E_2)= -\frac{\Delta \ln h}{z^2 h^2} - \frac{4}{z^2}$
		and of $\De$ is
		$K(E_3,E_4)= 0$,\label{it:7}
		\item $(M,g)$ is a space form (it is locally isometric to $\mathbb{C}^2$ with its standard metric of constant holomorphic sectional curvature $0$) if and only if $\Delta \ln h = -4 h^2$.\label{it:8}
	\end{enumerate}
\end{theorem}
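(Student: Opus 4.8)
The plan is to set up an explicit special frame for the metric $g$ of Theorem~\ref{tw:semi-sym}, verify that $\al=-\tfrac2z$ is the correct normalizing function, compute the two connection coefficients $\G^2_{11},\G^1_{22}$ in that frame, and then feed everything into the formulas~(\ref{Ricci}), (\ref{teta2}), (\ref{k12}), (\ref{k34}) from Section~3. Concretely, I would first observe that $g=z^2 g_\Sigma+\te_3\otimes\te_3+\te_4\otimes\te_4$ forces the orthonormal coframe
\[
\te_1=zh\,dx,\quad \te_2=zh\,dy,\quad \te_3,\ \te_4\ \text{as given},
\]
so that $\{E_1,E_2,E_3,E_4\}$ dual to this coframe is a candidate special frame. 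Part~\ref{it:1} is immediate: it is just the defining PDE for $H$ in Theorem~\ref{tw:semi-sym} restated. For part~\ref{it:2}, I would compute $d\te_1$ and $d\te_2$ directly from $\te_1=zh\,dx$, $\te_2=zh\,dy$, compare with the structure equations~(\ref{dteta}), and read off $\G^2_{11}$ from the coefficient of $\te_1\w\te_2$ in $d\te_1$ and $\G^1_{22}$ from $d\te_2$; the term $\tfrac12\al\te_1\w\te_4$ in (\ref{dteta}) fixes $\al=-\tfrac2z$ simultaneously and confirms the frame is special. The differentiation $d(zh\,dx)=h\,dz\w dx+z\,dh\w dx$ must be re-expressed in the coframe $\{\te_i\}$ using $dz=\te_4+\sin\tfrac12 tH\,dx+\cos\tfrac12 tH\,dy$, which is where the explicit forms of $\te_3,\te_4$ enter and produce the $\cos\tfrac12 tH$ and $\sin\tfrac12 tH$ terms in~\ref{it:2}.

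**The computational core.** With $\al=-\tfrac2z$ and the expressions for $\G^2_{11},\G^1_{22}$ in hand, parts~\ref{it:3}--\ref{it:7} are a matter of substitution. For~\ref{it:3}: since $\al$ depends only on $z$ and $E_3,E_4$ involve $\p_t,\p_z$ plus $U$-directions, one computes $E_4\al=-\tfrac2{z^2}\cdot(\text{coefficient of }\p_z\text{ in }E_4)$, checks $E_3\al=0$, and then the combination $E_2\G^2_{11}+E_1\G^1_{22}-(\G^2_{11})^2-(\G^1_{22})^2$ must collapse — after using $\Delta\ln H=2h^2$ — to $-\tfrac{\Delta\ln h}{z^2h^2}-\tfrac2{z^2}$; adding $E_4\al-\tfrac32\al^2 = -\tfrac2{z^2}-\tfrac6{z^2}\cdot\tfrac12\cdot 2$... one gets the claimed $Ric(E_1,E_1)=-\tfrac{\Delta\ln h}{z^2h^2}-\tfrac4{z^2}$, and $Ric(E_3,E_3)$ reduces to $-E_4(E_4\ln\al-\al)-(E_4\ln\al-\al)^2+\al(E_4\ln\al-\al)+\tfrac{\al^2}2$, which with $\al=-2/z$ gives $E_4\ln\al-\al = \tfrac1z\cdot(\ldots)$ and ultimately $0$. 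The off-diagonal entries vanish because $\al=\al(z)$ kills $E_3\tfrac\al2$, $E_i\al$ for $i=1,2$, and the mixed Ricci terms in~(\ref{Ricci}) all involve $E_1\ln\al$ or $E_2\ln\al$, which are zero. Parts~\ref{it:4} is $\tau=\operatorname{tr}Ric=2\,Ric(E_1,E_1)$; part~\ref{it:5} then follows from $\kappa=\tau-6(|\te|^2+\delta\te)=\tau-6(2E_4\al-\al^2)$ by~(\ref{teta2}), and $2E_4\al-\al^2$ should vanish for $\al=-2/z$ (indeed $E_4\al$ picks up $\tfrac2{z^2}\cdot 1$ from the $dz$-part and $\al^2=4/z^2$... one must check the sign bookkeeping carefully). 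Part~\ref{it:6}: since $Ric$ is $\bJ$-invariant and $W^-$ has $\Om$ as eigenvector, $W^-$ is determined by $\tau,\kappa,\delta$ via~(\ref{riemann}); here $\de$ is half the difference of Ricci eigenvalues $=\tfrac12(Ric(E_1,E_1)-Ric(E_3,E_3))=\tfrac12 Ric(E_1,E_1)$, and the standard decomposition of $W^-$ in the $\{\Om,\phi,\psi\}$ frame gives eigenvalues $\bigl(-\tfrac{\kappa}{6}+\tfrac{?}{}\bigr)$ — the cleanest route is to use that for a Kähler surface $W^-$ in this basis is $\mathrm{diag}(-\tfrac{\kappa}{6}+?,\ldots)$; I would instead just cite that $W^-=\frac{\kappa}{6}\mathrm{diag}(-\tfrac13,\tfrac16,\tfrac16)\cdot 2$... the point being that the three eigenvalues are $(-\tfrac{\kappa}{6},\tfrac{\kappa}{12},\tfrac{\kappa}{12})$ up to the normalization in which $W^-$ acts on self-dual-type $2$-forms, and substituting $\kappa$ from~\ref{it:5} yields the matrix. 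Part~\ref{it:7}: $K(E_1,E_2)$ comes straight from~(\ref{k12}) with the values of $\G^2_{11},\G^1_{22}$ and $\al$ — same computation as $Ric(E_1,E_1)$ but without the $E_4\al-\tfrac32\al^2$ contribution, except that $K(E_1,E_2)=E_1\G^1_{22}+E_2\G^2_{11}-(\G^2_{11})^2-(\G^1_{22})^2-\al^2$, and since $\al^2=4/z^2$ this gives exactly $-\tfrac{\Delta\ln h}{z^2h^2}-\tfrac4{z^2}$ matching~\ref{it:3}; $K(E_3,E_4)$ from~(\ref{k34}) must come out $0$, consistent with $Ric(E_3,E_3)=0$.

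**The final equivalence and the main obstacle.** For part~\ref{it:8}: a $4$-dimensional Kähler surface is a complex space form (constant holomorphic sectional curvature) iff it is a QCH Kähler surface with $b=c=0$ in $R=a\Pi+b\Phi+c\Psi$, equivalently iff $W^-=0$ and the Ricci tensor is a multiple of $g$ (Einstein). From~\ref{it:6}, $W^-=0$ forces $\tfrac{\Delta\ln h}{z^2h^2}+\tfrac4{z^2}=0$, i.e. $\Delta\ln h=-4h^2$; conversely that identity makes $W^-=0$, makes $Ric=0$ by~\ref{it:3}, hence $R=a\Pi$ and the surface is flat Kähler, locally $\mathbb{C}^2$ with the standard metric of constant holomorphic curvature $0$. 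So I would prove~\ref{it:8} as "$W^-=0\iff\Delta\ln h=-4h^2$" together with "then $Ric=0$, so $(M,g)$ is a flat Kähler surface, hence locally isometric to $\mathbb C^2$." The main obstacle I anticipate is the sign-and-coefficient bookkeeping in computing $E_1,E_2,E_3,E_4$ explicitly as vector fields dual to the rather intricate coframe $\{zh\,dx,zh\,dy,\te_3,\te_4\}$ — in particular inverting the coframe matrix to express $\p_x,\p_y,\p_z,\p_t$ in terms of $E_i$, so that one can evaluate $E_1\G^1_{22}$, $E_2\G^2_{11}$, $E_4\al$ and verify the many cancellations that reduce the raw expressions to the clean forms stated. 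A secondary subtlety is getting the normalization of $W^-$ right in the $\{\Om,\phi,\psi\}$ basis so that the trace-free condition and the relation to $\kappa$ are consistent; I would pin this down by cross-checking against the known formula~(\ref{riemann}) evaluated on the relevant $2$-forms rather than by a direct curvature-tensor computation.
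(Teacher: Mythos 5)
Your proposal matches the paper's proof essentially step for step: the explicit coframe $\te_1=zh\,dx$, $\te_2=zh\,dy$, reading $\G^2_{11},\G^1_{22}$ off $d\te_1,d\te_2$ against the structure equations, substitution into the Ricci/sectional-curvature formulas of Section~3, $\kappa=\tau$ via $2E_4\al-\al^2=0$, and $W^-=\mathrm{diag}(\kappa/6,-\kappa/12,-\kappa/12)$ (the paper's signs, not the reversed ones you hedge toward). The only cosmetic difference is in \ref{it:8}, where you phrase the equivalence as ``$W^-=0$ and Einstein'' while the paper uses $K(E_1,E_2)=K(E_3,E_4)$ and then concludes $R=0$ from~(\ref{riemann}); both reduce to $\Delta\ln h=-4h^2$ and the same appeal to~(\ref{riemann}).
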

\begin{proof}
	\ref{it:1} It follows from Theorem \ref{tw:semi-sym} that $\Delta \ln H=2h^2$ on $U$.\\
	\ref{it:2} Take a coordinate system such that $E_1=\frac1f\p_x+k\p_z+l\p_t,E_2=\frac1f\p_y+m\p_z+n\p_t,E_3=\al \p_t,E_4=\p_z$. Then  $\te_1=fdx, \te_2=fdy, \te_4=dz-(fk)dx-(fm)dy,\te_3=\frac{1}{\al} dt-(\frac{1}{\al} lf)dx-(\frac{1}{\al} nf)dy$.  Let  $\al=-\frac{2}{z}$. Then \[f=zh, m=\frac{\cos \frac{1}{2} t H}{zh}, k=\frac{\sin \frac{1}{2} t H}{zh},\]
	\[n=\frac{-\frac{2}{z} \sin \frac{1}{2} t H +2 (\ln H)_x}{zh},
	l=\frac{\frac{2}{z} \cos \frac{1}{2} t H -2 (\ln H)_y}{zh}.\]
	Since
	\[\te_1 = f dx = zh dx,\]
	we have
	\[d \te_1 = zh_y dy \w dx + h dz \w dx.\]
	On the other hand, from (\ref{dteta}) we have
	\begin{align*}
		d \te_1 &= \G^2_{11} \te_1 \w \te_2 + \frac{1}{2} \al \te_1 \w \te_4\\
		&= \G^2_{11} z^2 h^2 dx \w dy  + \frac{1}{2} \al (f dx) \w (dz - \sin \frac{1}{2} t H dx - \cos \frac{1}{2} t H dy)\\
		&= (\G^2_{11} z^2 h^2 - \frac{1}{2} \al z h \cos \frac{1}{2} t H ) dx \w dy  + \frac{1}{2} \al zh dx \w dz.
	\end{align*}
	This yields
	\begin{align*}
		\G^2_{11} &= \frac{\frac{1}{2} \al zh \cos \frac{1}{2} t H -zh_y}{z^2 h^2} = -\frac{\cos \frac{1}{2} t H}{z^2 h}-\frac{h_y}{z h^2}.
	\end{align*}
	Similarly
	\[\te_2 = f dy = zh dy\]
	and
	\[d \te_2 = zh_x dx \w dy + h dz \w dy.\]
	On the other hand, from (\ref{dteta}) we have 
	\begin{align*}
		d \te_2 &= -\G^1_{22} \te_1 \w \te_2 + \frac{1}{2} \al \te_2 \w \te_4\\
		&= -\G^1_{22} z^2 h^2 dx \w dy  - \frac{1}{z} (f dy) \w (dz - \sin \frac{1}{2} t H dx - \cos \frac{1}{2} t H dy)\\
		&= (-\G^1_{22} z^2 h^2 - h \sin \frac{1}{2} t H) dx \w dy  - h dy \w dz
	\end{align*}
	and we get
	\begin{align*}
		\G^1_{22} &= \frac{-h \sin \frac{1}{2} t H -zh_x}{z^2 h^2} = -\frac{\sin \frac{1}{2} t H}{z^2 h}-\frac{h_x}{z h^2}.
	\end{align*}
	\ref{it:3} From (\ref{Ricci}) we obtain
	\begin{align*}
		Ric(E_1, E_1) &= Ric(E_2, E_2) = -\frac{\Delta \ln h}{z^2 h^2} -\frac{4}{z^2}
	\end{align*}
	and $Ric(E_i,E_j)=0$ for the other cases. Hence in the frame $\{E_1,E_2,E_3,E_4\}$ we have
	\[
	Ric=\begin{bmatrix}
		-\frac{\Delta \ln h}{z^2 h^2} -\frac{4}{z^2} & 0 & 0 & 0 \\
		0 & -\frac{\Delta \ln h}{z^2 h^2} -\frac{4}{z^2} & 0 & 0\\
		0 & 0 & 0 & 0\\
		0 & 0 & 0 & 0
	\end{bmatrix}.\]\\
	\ref{it:4} The scalar curvature of $(M,g)$ is
	\[\tau = \mathrm{tr}_g Ric = -\frac{2 \Delta \ln h}{z^2 h^2} -\frac{8}{z^2}.\]
	\ref{it:5} Since by (\ref{teta2})
	\[\vert \te \vert^2 +\delta \te = 2 E_4 \al - \al^2 = 0,\]
	it follows that the conformal scalar curvature of $(M,g)$ is
	\[\kappa = \tau -6(\vert \te \vert^2 +\delta \te) = -\frac{2 \Delta \ln h}{z^2 h^2} -\frac{8}{z^2}.\]
	\ref{it:6} Since the Weyl tensor 
	\[W^- =
	\begin{bmatrix}
		\frac{\kappa}{6} & 0 & 0\\
		0 & -\frac{\kappa}{12} & 0\\ 
		0 & 0 & -\frac{\kappa}{12}\\ 
	\end{bmatrix},\] we have
	\[W^- =
	\begin{bmatrix}
		-\frac{\Delta \ln h}{3z^2 h^2} -\frac{4}{3z^2} & 0 & 0 \\
		0 & \frac{\Delta \ln h}{6z^2 h^2} +\frac{2}{3z^2} & 0\\
		0 & 0 & \frac{\Delta \ln h}{6z^2 h^2} +\frac{2}{3z^2} 
	\end{bmatrix}\] in the frame $\{\Om, \phi, \psi \}$, where $\{\phi, \psi \}$ is any orthonormal basis of $LM$.\\
	\ref{it:7} From (\ref{R12}) we have
	\begin{align*}
		(R(E_1, E_2) E_1, E_2) &= -E_1 \G^1_{22} - E_2 \G^2_{11} + (\G^2_{11})^2 + (\G^1_{22})^2 + \al^2\\
		&= \frac{\Delta \ln h}{z^2 h^2} + \al^2
	\end{align*}
	and from (\ref{R34}) we obtain
	\[(R(E_3, E_4) E_3, E_4) = E_3 \om^4_3 (E_4) - E_4 \om^4_3 (E_3) - \om^4_3 ((E_4 \ln \al - \al) E_3) = 0.\]
	This implies that the sectional curvature of $\DE$ is
	\begin{align*}
		K(E_1,E_2)&=R(E_1, E_2, E_2, E_1) = -(R(E_1, E_2) E_1, E_2) \\
		&= -\frac{\Delta \ln h}{z^2 h^2} - \al^2 = -\frac{\Delta \ln h}{z^2 h^2} - \frac{4}{z^2}
	\end{align*} 
	and of $\De$ is
	\[K(E_3,E_4)=R(E_3, E_4, E_4, E_3) = -(R(E_3, E_4) E_3, E_4) = 0.\]
	\ref{it:8} From \ref{it:7} it follows that
	\[K(E_1, E_2) = K(E_3, E_4) = 0\]
	if and only if
	\begin{equation}\label{deltasemi}
		\Delta \ln h = -4 h^2.
	\end{equation}
	It follows from \ref{it:1} that in this case we have
	\[\begin{cases}
		\Delta \ln h &= -4 h^2,\\
		\Delta \ln H &= 2 h^2.
	\end{cases}\]
	If (\ref{deltasemi}) holds, then $\delta=0$, $\kappa=0$ and $\tau = 0$. Then from (\ref{riemann}) we obtain that the Riemann curvature tensor $R=0$ and hence the holomorphic sectional curvature is $H=0$, since for $X$ such that $\vert \vert X \vert \vert = 1$ we have
	\[H(X) = K(X,\bJ X) = R(X,\bJ X,\bJ X,X) = 0.\]
	It follows that (\ref{deltasemi}) holds if and only if the holomorphic sectional curvature is constant and is equal to $0$.
	Then we conclude that $(M,g)$ is Einstein and is a space form if and only if $\Delta \ln h = -4 h^2$, because it is generalized Calabi type K\"ahler surface which is not of Calabi type and it has constant holomorphic sectional curvature. It is locally isometric to $\mathbb{C}^2$ with its standard metric of constant holomorphic sectional curvature $0$ (see \cite{J-M}).
\end{proof}

Now we consider the remaining cases.
\begin{theorem}
If metric $g$ is as in Theorem \ref{tw:tan} ($\al=2a\tan az$, $a \in \mathbb{R}$, $a \neq 0$), then
	\begin{enumerate}[label=\upshape(\roman*), leftmargin=*, widest=iiii]
	\item $\Delta \ln H=2a^2 h^2-4a^2 H^2$ on $U$, \label{it:1}
	\item $\G^2_{11} = \frac{a \sin az \cos 2at H}{\cos^2 az h}-\frac{h_y}{\cos az h^2}$ and $\G^1_{22} = \frac{a \sin az \sin 2at H}{\cos^2 az h} - \frac{h_x}{\cos az h^2}$, \label{it:2}
	\item the Ricci tensor of $(M,g)$ represented in the frame $\{E_1,E_2,E_3,E_4\}$ is 
	\[
	Ric=\begin{bmatrix}
		\frac{-4a^2 h^2 + 2a^2 H^2 -\Delta \ln h}{h^2 \cos^2 az} +6a^2 & 0 & 0 & 0 \\
		0 & \frac{-4a^2 h^2 + 2a^2 H^2 -\Delta \ln h}{h^2 \cos^2 az} +6a^2 & 0 & 0\\
		0 & 0 & 6a^2 & 0\\
		0 & 0 & 0 & 6a^2 
	\end{bmatrix},\]\label{it:3}
	\item the scalar curvature of $(M,g)$ is $\tau = \frac{-8a^2 h^2 + 4a^2 H^2 -2\Delta \ln h}{h^2 \cos^2 az} +24a^2$,\label{it:4}
	\item the conformal scalar curvature of $(M,g)$ is $\kappa = \frac{-8a^2 h^2 + 4a^2 H^2 -2\Delta \ln h}{h^2 \cos^2 az}$,\label{it:5}
	\item the Weyl tensor
	\[W^- =
	\begin{bmatrix}
		\frac{-4a^2 h^2 + 2a^2 H^2 -\Delta \ln h}{3 h^2 \cos^2 az} & 0 & 0 \\
		0 & \frac{4a^2 h^2 -2a^2 H^2 +\Delta \ln h}{6 h^2 \cos^2 az} & 0\\
		0 & 0 & \frac{4a^2 h^2 -2a^2 H^2 +\Delta \ln h}{6 h^2 \cos^2 az}
	\end{bmatrix}\] in the frame $\{\Om, \phi, \psi \}$, where $\{\phi, \psi \}$ is any orthonormal basis of $LM$,\label{it:6}
	\item the sectional curvature of $(M,g)$ of $\DE$ is
	$K(E_1,E_2)= \frac{2a^2 H^2 - \Delta \ln h}{\cos^2 az h^2} - 4a^2 \tan^2 az$
	and of $\De$ is
	$K(E_3,E_4)= 4a^2$,\label{it:7}
	\item $(M,g)$ is a space form (it is locally isometric to $\mathbb{CP}^2$ with the Fubini-Study metric of constant holomorphic sectional curvature $4a^2$) if and only if $\Delta \ln h = -4a^2 h^2 + 2 a^2 H^2$.\label{it:8}
\end{enumerate}
\end{theorem}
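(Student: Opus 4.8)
The plan is to run the computation in complete parallel to the proof of the preceding theorem (the case $\al = -\tfrac2z$), substituting $\al = 2a\tan az$ throughout and carefully tracking the trigonometric identities that appear. Item \ref{it:1} is nothing but the defining PDE of Theorem \ref{tw:tan}. For \ref{it:2} I would introduce, as in that proof, an adapted local frame $E_1 = \tfrac1f\p_x + k\p_z + l\p_t$, $E_2 = \tfrac1f\p_y + m\p_z + n\p_t$, $E_4 = \p_z$, with $E_3$ the remaining dual vector, so that $\te_1 = f\,dx$, $\te_2 = f\,dy$; from the metric of Theorem \ref{tw:tan} one has $f = h\cos az$, and $k,l,m,n$ are read off from the explicit $\te_3,\te_4$. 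Then $d\te_1 = d(h\cos az\,dx) = \cos az\,h_y\,dy\w dx - ah\sin az\,dz\w dx$, and matching this with $d\te_1 = \G^2_{11}\te_1\w\te_2 + \tfrac12\al\te_1\w\te_4$ from (\ref{dteta}), the $dx\w dz$ coefficient checks automatically while the $dx\w dy$ coefficient yields $\G^2_{11}$ (the $\cos 2at\,H$ term of $\te_4$ producing the $\tfrac{a\sin az\cos 2at\,H}{h\cos^2 az}$ contribution and $h_y$ the rest); $\G^1_{22}$ follows the same way from $d\te_2$.

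For \ref{it:3} I substitute $\G^2_{11},\G^1_{22},\al = 2a\tan az$ and $E_4 = \p_z$ into the list (\ref{Ricci}). The $\DE$-diagonal entry combines $E_4\al - \tfrac32\al^2 = 2a^2\sec^2 az - 6a^2\tan^2 az = 2a^2 - 4a^2\tan^2 az$ with $E_1\G^1_{22} + E_2\G^2_{11} - (\G^2_{11})^2 - (\G^1_{22})^2 = \tfrac{2a^2H^2 - \Delta\ln h}{h^2\cos^2 az}$ (the same combination that occurs in (\ref{R12}), reached here using the $H$-equation of \ref{it:1}), giving $\tfrac{-4a^2h^2 + 2a^2H^2 - \Delta\ln h}{h^2\cos^2 az} + 6a^2$. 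The $\De$-diagonal entry $Ric(E_3,E_3)$ requires the identity $E_4\ln\al - \al = 2a\cot 2az$, whereupon $-E_4(E_4\ln\al-\al) - (E_4\ln\al-\al)^2 = 4a^2(\csc^2 2az - \cot^2 2az) = 4a^2$ and $\al(E_4\ln\al-\al) + \tfrac{\al^2}{2} = (2a^2 - 2a^2\tan^2 az) + 2a^2\tan^2 az = 2a^2$, so $Ric(E_3,E_3) = 6a^2$; one also verifies that every off-diagonal entry in (\ref{Ricci}) vanishes. Items \ref{it:4}, \ref{it:5}, \ref{it:6} are then formal: $\tau = \mathrm{tr}_g Ric$; $\vert\te\vert^2 + \delta\te = 2E_4\al - \al^2 = 4a^2$ by (\ref{teta2}), hence $\kappa = \tau - 24a^2$; and $W^- = \mathrm{diag}(\tfrac\kappa6, -\tfrac\kappa{12}, -\tfrac\kappa{12})$ in the frame $\{\Om,\phi,\psi\}$ exactly as in the preceding proof.

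For \ref{it:7} I use (\ref{R12}) and (\ref{R34}): $K(E_1,E_2) = -(R(E_1,E_2)E_1,E_2) = \tfrac{2a^2H^2 - \Delta\ln h}{h^2\cos^2 az} - \al^2 = \tfrac{2a^2H^2 - \Delta\ln h}{h^2\cos^2 az} - 4a^2\tan^2 az$, while in (\ref{R34}) one has, from (\ref{omegi}), $\om^4_3(E_4) = 0$, $\om^4_3(E_3) = -(E_4\ln\al - \al) = -2a\cot 2az$ and $E_4(E_4\ln\al - \al) = -4a^2\csc^2 2az$, so $(R(E_3,E_4)E_3,E_4) = 4a^2\cot^2 2az - 4a^2\csc^2 2az = -4a^2$, i.e. $K(E_3,E_4) = 4a^2$. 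Finally for \ref{it:8}: by \ref{it:7}, $K(E_1,E_2) = K(E_3,E_4)$ is equivalent to $\tfrac{2a^2H^2 - \Delta\ln h}{h^2\cos^2 az} = 4a^2\sec^2 az$, that is $\Delta\ln h = -4a^2h^2 + 2a^2H^2$; under this equation $\kappa = 0$, $\tau = 24a^2$ and the two Ricci eigenvalues coincide so $\delta = 0$, whence (\ref{riemann}) collapses to $R = \tfrac\tau6\,\Pi = 4a^2\,\Pi$, i.e. the holomorphic sectional curvature is the constant $4a^2$; conversely, a space form has $R$ a multiple of $\Pi$, forcing $\kappa = 0$ and $\delta = 0$ and hence the same equation. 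The identification with $\mathbb{CP}^2$ carrying the Fubini-Study metric of constant holomorphic sectional curvature $4a^2$ is then the classification of \cite{J-M}.

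The main obstacles I anticipate are the bookkeeping in \ref{it:2} --- extracting $f,k,l,m,n$, and from these $\G^2_{11}$ and $\G^1_{22}$, correctly out of the somewhat elaborate one-forms $\te_3,\te_4$ of Theorem \ref{tw:tan} --- and the trigonometric simplifications in \ref{it:3} and \ref{it:7}, in particular verifying $Ric(E_3,E_3) = 6a^2$ and $K(E_3,E_4) = 4a^2$ via the identities $E_4\ln\al - \al = 2a\cot 2az$ and $\csc^2 2az - \cot^2 2az = 1$, together with checking that every off-diagonal component of the Ricci tensor in (\ref{Ricci}) genuinely vanishes.
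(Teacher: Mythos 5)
Your proposal is correct and follows essentially the same route as the paper: the same adapted frame and matching of $d\te_1,d\te_2$ against (\ref{dteta}) for \ref{it:2}, substitution of $\al=2a\tan az$ into (\ref{Ricci}), (\ref{teta2}), (\ref{R12}), (\ref{R34}) for \ref{it:3}--\ref{it:7}, and the collapse of (\ref{riemann}) to $R=4a^2\Pi$ for \ref{it:8}. The trigonometric identities you flag (e.g. $E_4\ln\al-\al=2a\cot 2az$, $\csc^2 2az-\cot^2 2az=1$) are exactly the simplifications implicit in the paper's computation, so there is no gap.
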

\begin{proof}
	\ref{it:1} It follows from Theorem \ref{tw:tan} that $\Delta \ln H=2a^2 h^2-4a^2 H^2$ on $U$.\\
	\ref{it:2} Take a coordinate system such that $E_1=\frac1f\p_x+k\p_z+l\p_t,E_2=\frac1f\p_y+m\p_z+n\p_t,E_3=\frac1\beta\p_t,E_4=\p_z$. Then  $\te_1=fdx, \te_2=fdy, \te_4=dz-(fk)dx-(fm)dy,\te_3=\beta dt-(\beta lf)dx-(\beta nf)dy$.  Let  $\al=2a\tan az,\beta=\sin 2az$. Let $g=\cos az$. Then \[f=gh, m=\frac{\cos 2at H}{gh}, k=\frac{\sin 2at H}{gh},\]
	\[n=\frac{-\cot 2az \sin 2at H +\frac{1}{2a} (\ln H)_x}{gh},
	l=\frac{\cot 2az \cos 2at H -\frac{1}{2a} (\ln H)_y}{gh}.\]
	Since
	\[\te_1 = f dx = gh dx,\]
	we have
	\[d \te_1 = gh_y dy \w dx + g_z h dz \w dx.\]
	On the other hand, from (\ref{dteta}) we have
	\begin{align*}
		d \te_1 &= \G^2_{11} \te_1 \w \te_2 + \frac{1}{2} \al \te_1 \w \te_4\\
		&= \G^2_{11} g^2 h^2 dx \w dy  + \frac{1}{2} \al (f dx) \w (dz - fk dx - fm dy)\\
		&= (\G^2_{11} g^2 h^2 - \frac{1}{2} \al g h \cos 2at H) dx \w dy  + \frac{1}{2} \al gh dx \w dz.
	\end{align*}
	This yields
	\begin{align*}
		\G^2_{11} &= \frac{1}{2} \al \frac{\cos 2at H}{gh} - \frac{h_y}{gh^2} = \frac{1}{2} \al \frac{\cos 2at H}{\cos az h}-\frac{h_y}{\cos az h^2}\\
		&= \frac{a \sin az \cos 2at H}{\cos^2 az h}-\frac{h_y}{\cos az h^2}.
	\end{align*}
	Similarly
	\[\te_2 = f dy = gh dy\]
	and
	\[d \te_2 = gh_x dx \w dy + g_z h dz \w dy.\]
	On the other hand, from (\ref{dteta}) we have
	\begin{align*}
		d \te_2 &= -\G^1_{22} \te_1 \w \te_2 + \frac{1}{2} \al \te_2 \w \te_4\\
		&= -\G^1_{22} g^2 h^2 dx \w dy  + \frac{1}{2} \al (f dy) \w (dz - fk dx - fm dy)\\
		&= (-\G^1_{22} g^2 h^2 + \frac{1}{2} \al g h \sin 2at H) dx \w dy  + \frac{1}{2} \al gh dy \w dz
	\end{align*}
	and we get
	\begin{align*}
	\G^1_{22} &= \frac{1}{2} \al \frac{\sin 2at H}{gh} - \frac{h_x}{gh^2} = \frac{1}{2} \al \frac{\sin 2at H}{\cos az h} - \frac{h_x}{\cos az h^2}\\
	&= \frac{a \sin az \sin 2at H}{\cos^2 az h} - \frac{h_x}{\cos az h^2}.
	\end{align*}
	\ref{it:3} From (\ref{Ricci}) we obtain
	\begin{align*}
		Ric(E_1, E_1) &= Ric(E_2, E_2) = \frac{-4a^2 h^2 + 2a^2 H^2 -\Delta \ln h}{h^2 \cos^2 az} +6a^2,\\
		Ric(E_3, E_3) &= Ric(E_4, E_4) = 6a^2
	\end{align*}
	and $Ric(E_i,E_j)=0$ for the other cases. Hence in the frame $\{E_1,E_2,E_3,E_4\}$ we have
	\[
	Ric=\begin{bmatrix}
		\frac{-4a^2 h^2 + 2a^2 H^2 -\Delta \ln h}{h^2 \cos^2 az} +6a^2 & 0 & 0 & 0 \\
		0 & \frac{-4a^2 h^2 + 2a^2 H^2 -\Delta \ln h}{h^2 \cos^2 az} +6a^2 & 0 & 0\\
		0 & 0 & 6a^2 & 0\\
		0 & 0 & 0 & 6a^2
	\end{bmatrix}.\]\\
	\ref{it:4} The scalar curvature of $(M,g)$ is
		\[\tau = \mathrm{tr}_g Ric = \frac{-8a^2 h^2 + 4a^2 H^2 -2\Delta \ln h}{h^2 \cos^2 az} +24a^2.\]
	\ref{it:5} Since by (\ref{teta2})
	\[\vert \te \vert^2 +\delta \te = 2 E_4 \al - \al^2 = 4a^2,\]
	it follows that the conformal scalar curvature of $(M,g)$ is
		\[\kappa = \tau -6(\vert \te \vert^2 +\delta \te)= \frac{-8a^2 h^2 + 4a^2 H^2 -2\Delta \ln h}{h^2 \cos^2 az}.\]
	\ref{it:6} Since the Weyl tensor
	\[W^- =
	\begin{bmatrix}
		\frac{\kappa}{6} & 0 & 0\\
		0 & -\frac{\kappa}{12} & 0\\ 
		0 & 0 & -\frac{\kappa}{12}\\ 
	\end{bmatrix},\] we have
	\[W^- =
	\begin{bmatrix}
		\frac{-4a^2 h^2 + 2a^2 H^2 -\Delta \ln h}{3 h^2 \cos^2 az} & 0 & 0 \\
		0 & \frac{4a^2 h^2 -2a^2 H^2 +\Delta \ln h}{6 h^2 \cos^2 az} & 0\\
		0 & 0 & \frac{4a^2 h^2 -2a^2 H^2 +\Delta \ln h}{6 h^2 \cos^2 az}
	\end{bmatrix}\] in the frame $\{\Om, \phi, \psi \}$, where $\{\phi, \psi \}$ is any orthonormal basis of $LM$.\\
	\ref{it:7} From (\ref{R12}) we have
	\begin{align*}
		(R(E_1, E_2) E_1, E_2) &= -E_1 \G^1_{22} - E_2 \G^2_{11} + (\G^2_{11})^2 + (\G^1_{22})^2 + \al^2\\
		&= \frac{-2a^2 H^2 + \Delta \ln h}{g^2 h^2} + \al^2
	\end{align*}
	and from (\ref{R34}) we obtain
	\[(R(E_3, E_4) E_3, E_4) = E_3 \om^4_3 (E_4) - E_4 \om^4_3 (E_3) - \om^4_3 ((E_4 \ln \al - \al) E_3) = -4a^2.\]
	This implies that the sectional curvature of $\DE$ is
	\begin{align*}
		K(E_1,E_2)&=R(E_1, E_2, E_2, E_1) = -(R(E_1, E_2) E_1, E_2) \\
		&= \frac{2a^2 H^2 - \Delta \ln h}{g^2 h^2} - \al^2
		= \frac{2a^2 H^2 - \Delta \ln h}{\cos^2 az h^2} - 4a^2 \tan^2 az
	\end{align*}
	and of $\De$ is
	\[K(E_3,E_4)=R(E_3, E_4, E_4, E_3) = -(R(E_3, E_4) E_3, E_4) = 4a^2.\]
	\ref{it:8} From \ref{it:7} it follows that
	\[K(E_1, E_2) = K(E_3, E_4) = 4a^2\]
	if and only if
	\begin{equation}\label{deltatan}
		\Delta \ln h = -4a^2 h^2 + 2 a^2 H^2.
	\end{equation}
	It follows from \ref{it:1} that in this case we have
	\[\begin{cases}
		\Delta \ln h &= -4a^2 h^2 + 2 a^2 H^2,\\
		\Delta \ln H &= 2a^2 h^2-4a^2 H^2.
	\end{cases}\]
	If (\ref{deltatan}) holds, then $\delta=0$, $\kappa=0$ and $\tau = 24a^2$. Then from (\ref{riemann}) we obtain that the Riemann curvature tensor $R=4a^2 \Pi$ and hence the holomorphic sectional curvature is $H=4a^2$, since for $X$ such that $\vert \vert X \vert \vert = 1$ we have
	\[H(X) = K(X,\bJ X) = R(X,\bJ X,\bJ X,X) = 4a^2 \Pi (X,\bJ X,\bJ X,X) = 4a^2.\]
	It follows that (\ref{deltatan}) holds if and only if the holomorphic sectional curvature is constant and is equal to $4a^2$.
	Then we conclude that $(M,g)$ is Einstein and is a space form if and only if $\Delta \ln h = -4a^2 h^2 + 2 a^2 H^2$, because it is generalized Calabi type K\"ahler surface which is not of Calabi type and it has constant holomorphic sectional curvature. It is locally isometric to $\mathbb{CP}^2$ with its standard Fubini-Study metric of constant holomorphic sectional curvature $4a^2$ (see \cite{J-M}).
\end{proof}

Now we have the next case.
\begin{theorem}
	If metric $g$ is as in Theorem \ref{tw:coth} ($\al=-2a\coth az$, $a \in \mathbb{R}$, $a \neq 0$), then
	\begin{enumerate}[label=\upshape(\roman*), leftmargin=*, widest=iiii]
		\item $\Delta \ln H=2a^2 h^2+4a^2 H^2$ on $U$, \label{it:1}
		\item $\G^2_{11} = -\frac{a \cosh az \sin 2at H}{\sinh^2 az h}-\frac{h_y}{\sinh az h^2}$ and $\G^1_{22} = -\frac{a \cosh az \cos 2at H}{\sinh^2 az h} - \frac{h_x}{\sinh az h^2}$, \label{it:21}
		\item the Ricci tensor of $(M,g)$ represented in the frame $\{E_1,E_2,E_3,E_4\}$ is 
		\[
		Ric=\begin{bmatrix}
			-\frac{4a^2 h^2 + 2a^2 H^2 +\Delta \ln h}{h^2 \sinh^2 az} -6a^2 & 0 & 0 & 0 \\
			0 & -\frac{4a^2 h^2 + 2a^2 H^2 +\Delta \ln h}{h^2 \sinh^2 az} -6a^2 & 0 & 0\\
			0 & 0 & -6a^2 & 0\\
			0 & 0 & 0 & -6a^2
		\end{bmatrix},\]\label{it:3}
		\item the scalar curvature of $(M,g)$ is $\tau = -\frac{8a^2 h^2 + 4a^2 H^2 +2\Delta \ln h}{h^2 \sinh^2 az} -24a^2$,\label{it:4}
		\item the conformal scalar curvature of $(M,g)$ is $\kappa = -\frac{8a^2 h^2 + 4a^2 H^2 +2\Delta \ln h}{h^2 \sinh^2 az}$,\label{it:5}
		\item the Weyl tensor
		\[W^- =
		\begin{bmatrix}
			-\frac{4a^2 h^2 + 2a^2 H^2 +\Delta \ln h}{3 h^2 \sinh^2 az} & 0 & 0 \\
			0 & \frac{4a^2 h^2 +2a^2 H^2 +\Delta \ln h}{6 h^2 \sinh^2 az} & 0\\
			0 & 0 & \frac{4a^2 h^2 +2a^2 H^2 +\Delta \ln h}{6 h^2 \sinh^2 az}
		\end{bmatrix}\] in the frame $\{\Om, \phi, \psi \}$, where $\{\phi, \psi \}$ is any orthonormal basis of $LM$,\label{it:6}
		\item the sectional curvature of $(M,g)$ of $\DE$ is
		$K(E_1,E_2)= -\frac{2a^2 H^2 + \Delta \ln h}{\sinh^2 az h^2} - 4a^2 \coth^2 az$
		and of $\De$ is
		$K(E_3,E_4)= -4a^2$,\label{it:7}
		\item $(M,g)$ is a space form (it is locally isometric to $B^2$ with its standard metric of constant holomorphic sectional curvature $-4a^2$) if and only if $\Delta \ln h = -4a^2 h^2 - 2 a^2 H^2$.\label{it:8}
	\end{enumerate}
\end{theorem}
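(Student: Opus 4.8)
The plan is to follow, step by step, the scheme used in the two preceding theorems, now specialized to $\al=-2a\coth az$. Item \ref{it:1} is nothing but the defining equation for $H$ recorded in Theorem \ref{tw:coth}. For \ref{it:21} I introduce the adapted frame $E_1=\frac1f\p_x+k\p_z+l\p_t$, $E_2=\frac1f\p_y+m\p_z+n\p_t$, $E_3=\frac1\beta\p_t$, $E_4=\p_z$ with $\beta=\sinh 2az$, write $g=\sinh az$, and read off from the explicit $\te_3,\te_4$ of Theorem \ref{tw:coth} that $f=gh$, $k=\frac{\cos 2at\,H}{gh}$, $m=\frac{\sin 2at\,H}{gh}$, together with the matching $l,n$ built from $\coth 2az$ and $(\ln H)_x,(\ln H)_y$. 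Then I compute $d\te_1=d(gh\,dx)$ and $d\te_2=d(gh\,dy)$ directly and compare with the right-hand sides of (\ref{dteta}): the $dx\w dz$ and $dy\w dz$ coefficients match automatically (they both reduce to $\tfrac12\al g=-g_z$, i.e.\ $-a\cosh az=-a\cosh az$), and the $dx\w dy$ coefficients give $\G^2_{11}$ and $\G^1_{22}$ once one uses $\tfrac12\al/g=-a\cosh az/\sinh^2 az$.

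Granted \ref{it:21}, items \ref{it:3}--\ref{it:6} are obtained by substitution. I plug $\G^2_{11},\G^1_{22}$ and $\al=-2a\coth az$ into the Ricci formulas (\ref{Ricci}); the $H$-dependent block $E_2\G^2_{11}+E_1\G^1_{22}-(\G^2_{11})^2-(\G^1_{22})^2$ collapses to the $h,H$-part of the claimed $Ric(E_1,E_1)=Ric(E_2,E_2)$ — here the substitution $\Delta\ln H=2a^2h^2+4a^2H^2$, together with $\sin^2 2at+\cos^2 2at=1$, is what makes the $t$-dependence disappear — while the remaining terms $E_4\al-\tfrac32\al^2$ in $Ric(E_1,E_1)$ and $-E_4(E_4\ln\al-\al)-(E_4\ln\al-\al)^2+\al(E_4\ln\al-\al)+\tfrac{\al^2}{2}$ in $Ric(E_3,E_3)$ reduce, via $E_4\al=2a^2/\sinh^2 az$ and $E_4\ln\al-\al=2a\coth 2az$, to the constant $-6a^2$; this gives \ref{it:3}. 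Then \ref{it:4} is its trace, and \ref{it:5} follows from (\ref{teta2}), since $|\te|^2+\dl\te=2E_4\al-\al^2=-4a^2$, so $\kappa=\tau+24a^2$. Finally \ref{it:6} is immediate from the diagonal normal form $W^-=\mathrm{diag}(\tfrac\kappa6,-\tfrac\kappa{12},-\tfrac\kappa{12})$ used in the two previous proofs.

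For \ref{it:7} I use (\ref{R12}) and (\ref{R34}). In $-E_1\G^1_{22}-E_2\G^2_{11}+(\G^2_{11})^2+(\G^1_{22})^2+\al^2$ the same $H$-cancellation (with the opposite overall sign from the Ricci computation) leaves $\frac{2a^2H^2+\Delta\ln h}{g^2h^2}+\al^2$, whence $K(E_1,E_2)=-(R(E_1,E_2)E_1,E_2)=-\frac{2a^2H^2+\Delta\ln h}{\sinh^2 az\,h^2}-4a^2\coth^2 az$; and in (\ref{R34}), using (\ref{omegi}) to get $\om^4_3(E_4)=0$ and $\om^4_3(E_3)=-(E_4\ln\al-\al)=-2a\coth 2az$, one finds $(R(E_3,E_4)E_3,E_4)=E_4(2a\coth 2az)+(2a\coth 2az)^2=4a^2$, so $K(E_3,E_4)=-4a^2$. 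For \ref{it:8}: by \ref{it:7}, $K(E_1,E_2)=K(E_3,E_4)=-4a^2$ is equivalent to $\Delta\ln h=-4a^2h^2-2a^2H^2$; when it holds, $Ric(E_1,E_1)=Ric(E_3,E_3)=-6a^2$, so the difference of Ricci eigenvalues vanishes, $\dl=0$, hence $\kappa=0$ and $\tau=-24a^2$, and (\ref{riemann}) collapses to $R=-4a^2\,\Pi$; therefore $H(X)=R(X,\bJ X,\bJ X,X)=-4a^2\,\Pi(X,\bJ X,\bJ X,X)=-4a^2$ for every unit $X$, i.e.\ the holomorphic sectional curvature is the constant $-4a^2$, and the converse is immediate. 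Since $(M,g,\bJ)$ is a generalized Calabi type K\"ahler surface, not of Calabi type, with constant holomorphic sectional curvature $-4a^2$, it is Einstein and a space form, locally isometric to $B^2$ with its standard metric of constant holomorphic sectional curvature $-4a^2$, by \cite{J-M}.

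The routine but delicate point, where I expect to spend most of the effort, is the bookkeeping in \ref{it:21} and in the $H$-cancellations of \ref{it:3} and \ref{it:7}: one must juggle the hyperbolic identities $\cosh 2az=2\cosh^2 az-1=1+2\sinh^2 az$, $\sinh 2az=2\sinh az\cosh az$, keep track of the differentiations $E_1\G^1_{22}$, $E_2\G^2_{11}$ (which pull in $\p_x,\p_z,\p_t$ through $k,l,m,n$) accurately enough that the $t$-dependent pieces cancel exactly, and arrange the insertion of $\Delta\ln H$ so that it leaves precisely $\Delta\ln h$, together with $H^2$ carrying coefficient $2a^2$ and $h^2$ carrying coefficient $4a^2$, in the numerators — exactly as in the $\al=2a\tan az$ case, with hyperbolic functions in place of trigonometric ones.
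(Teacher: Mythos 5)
Your proposal follows the paper's proof essentially verbatim: the same adapted frame $E_1,E_2,E_3,E_4$ with $\beta=\sinh 2az$, $g=\sinh az$, the same comparison of $d\te_1$, $d\te_2$ with (\ref{dteta}) to extract $\G^2_{11}$, $\G^1_{22}$, the same substitutions into (\ref{Ricci}), (\ref{teta2}), (\ref{R12}), (\ref{R34}) and the diagonal form of $W^-$, and the same space-form argument via $\delta=\kappa=0$, $\tau=-24a^2$, $R=-4a^2\Pi$ and the classification in \cite{J-M}; all final formulas agree with the paper.

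One bookkeeping slip in your item (iii): $E_4\al-\tfrac32\al^2=-\tfrac{4a^2}{\sinh^2 az}-6a^2$, not the constant $-6a^2$. The block $E_2\G^2_{11}+E_1\G^1_{22}-(\G^2_{11})^2-(\G^1_{22})^2$ contributes only $-\tfrac{2a^2H^2+\Delta\ln h}{h^2\sinh^2 az}$ (exactly what your own item (vii) asserts, with the opposite sign and $\al^2$ added), and the remaining piece $-\tfrac{4a^2h^2}{h^2\sinh^2 az}=-\tfrac{4a^2}{\sinh^2 az}$ of $Ric(E_1,E_1)$ comes from $E_4\al-\tfrac32\al^2$; only the terms from $Ric(E_3,E_3)$ collapse to the bare constant $-6a^2$. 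The totals agree, so the computation you outline still yields the stated Ricci tensor, but as written your attribution of terms in (iii) is internally inconsistent with (vii) and should be corrected.
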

\begin{proof}
	\ref{it:1} It follows from Theorem \ref{tw:coth} that $\Delta \ln H=2a^2 h^2+4a^2 H^2$ on $U$.\\
	\ref{it:2} Take a coordinate system such that $E_1=\frac1f\p_x+k\p_z+l\p_t,E_2=\frac1f\p_y+m\p_z+n\p_t,E_3=\frac1\beta\p_t,E_4=\p_z$. Then  $\te_1=fdx, \te_2=fdy, \te_4=dz-(fk)dx-(fm)dy,\te_3=\beta dt-(\beta lf)dx-(\beta nf)dy$.  Let  $\al=-2a\coth az,\beta=\sinh 2az$. Let $g=\sinh az$. Then \[f=gh, m=\frac{\sin 2at H}{gh}, k=\frac{\cos 2at H}{gh},\]
	\[n=\frac{\coth 2az \cos 2at H -\frac{1}{2a} (\ln H)_x}{gh},
	l=\frac{-\coth 2az \sin 2at H +\frac{1}{2a} (\ln H)_y}{gh}.\]
	Since
	\[\te_1 = f dx = gh dx,\]
	we have
	\[d \te_1 = gh_y dy \w dx + g_z h dz \w dx.\]
	On the other hand, from (\ref{dteta}) we have
	\begin{align*}
		d \te_1 &= \G^2_{11} \te_1 \w \te_2 + \frac{1}{2} \al \te_1 \w \te_4\\
		&= \G^2_{11} g^2 h^2 dx \w dy  + \frac{1}{2} \al (f dx) \w (dz - fk dx - fm dy)\\
		&= (\G^2_{11} g^2 h^2 - \frac{1}{2} \al g h \sin 2at H) dx \w dy  + \frac{1}{2} \al gh dx \w dz.
	\end{align*}
	This yields
	\begin{align*}
		\G^2_{11} &= \frac{1}{2} \al \frac{\sin 2at H}{gh} - \frac{h_y}{gh^2} = \frac{1}{2} \al \frac{\sin 2at H}{\sinh az h}-\frac{h_y}{\sinh az h^2}\\
		&= -\frac{a \cosh az \sin 2at H}{\sinh^2 az h}-\frac{h_y}{\sinh az h^2}.
	\end{align*}
	Similarly
	\[\te_2 = f dy = gh dy\]
	and
	\[d \te_2 = gh_x dx \w dy + g_z h dz \w dy.\]
	On the other hand, from (\ref{dteta}) we have
	\begin{align*}
		d \te_2 &= -\G^1_{22} \te_1 \w \te_2 + \frac{1}{2} \al \te_2 \w \te_4\\
		&= -\G^1_{22} g^2 h^2 dx \w dy  + \frac{1}{2} \al (f dy) \w (dz - fk dx - fm dy)\\
		&= (-\G^1_{22} g^2 h^2 + \frac{1}{2} \al g h \cos 2at H) dx \w dy  + \frac{1}{2} \al gh dy \w dz
	\end{align*}
	and we get
	\begin{align*}
		\G^1_{22} &= \frac{1}{2} \al \frac{\cos 2at H}{gh} - \frac{h_x}{gh^2} = \frac{1}{2} \al \frac{\cos 2at H}{\sinh az h} - \frac{h_x}{\sinh az h^2}\\
		&= -\frac{a \cosh az \cos 2at H}{\sinh^2 az h} - \frac{h_x}{\sinh az h^2}.
	\end{align*}
	\ref{it:3} From (\ref{Ricci}) we obtain
	\begin{align*}
		Ric(E_1, E_1) &= Ric(E_2, E_2) = -\frac{4a^2 h^2 + 2a^2 H^2 +\Delta \ln h}{h^2 \sinh^2 az} -6a^2,\\
		Ric(E_3, E_3) &= Ric(E_4, E_4) = -6a^2
	\end{align*}
	and $Ric(E_i,E_j)=0$ for the other cases. Hence in the frame $\{E_1,E_2,E_3,E_4\}$ we have
	\[
	Ric=\begin{bmatrix}
		-\frac{4a^2 h^2 + 2a^2 H^2 +\Delta \ln h}{h^2 \sinh^2 az} -6a^2 & 0 & 0 & 0 \\
		0 & -\frac{4a^2 h^2 + 2a^2 H^2 +\Delta \ln h}{h^2 \sinh^2 az} -6a^2 & 0 & 0\\
		0 & 0 & -6a^2 & 0\\
		0 & 0 & 0 & -6a^2
	\end{bmatrix}.\]\\
	\ref{it:4} The scalar curvature of $(M,g)$ is
	\[\tau = \mathrm{tr}_g Ric = -\frac{8a^2 h^2 + 4a^2 H^2 +2\Delta \ln h}{h^2 \sinh^2 az} -24a^2.\]
	\ref{it:5} Since by (\ref{teta2})
	\[\vert \te \vert^2 +\delta \te = 2 E_4 \al - \al^2 = -4a^2,\]
	it follows that the conformal scalar curvature of $(M,g)$ is
	\[\kappa = \tau -6(\vert \te \vert^2 +\delta \te)= -\frac{8a^2 h^2 + 4a^2 H^2 +2\Delta \ln h}{h^2 \sinh^2 az}.\]
	\ref{it:6} Since the Weyl tensor
	\[W^- =
	\begin{bmatrix}
		\frac{\kappa}{6} & 0 & 0\\
		0 & -\frac{\kappa}{12} & 0\\ 
		0 & 0 & -\frac{\kappa}{12} 
	\end{bmatrix},\] we have
	\[W^- =
	\begin{bmatrix}
		-\frac{4a^2 h^2 + 2a^2 H^2 +\Delta \ln h}{3 h^2 \sinh^2 az} & 0 & 0 \\
		0 & \frac{4a^2 h^2 +2a^2 H^2 +\Delta \ln h}{6 h^2 \sinh^2 az} & 0\\
		0 & 0 & \frac{4a^2 h^2 +2a^2 H^2 +\Delta \ln h}{6 h^2 \sinh^2 az}
	\end{bmatrix}\] in the frame $\{\Om, \phi, \psi \}$, where $\{\phi, \psi \}$ is any orthonormal basis of $LM$.\\
	\ref{it:7} From (\ref{R12}) we have
	\begin{align*}
		(R(E_1, E_2) E_1, E_2) &= -E_1 \G^1_{22} - E_2 \G^2_{11} + (\G^2_{11})^2 + (\G^1_{22})^2 + \al^2\\
		&= \frac{2a^2 H^2 + \Delta \ln h}{g^2 h^2} + \al^2
	\end{align*}
	and from (\ref{R34}) we obtain
	\[(R(E_3, E_4) E_3, E_4) = E_3 \om^4_3 (E_4) - E_4 \om^4_3 (E_3) - \om^4_3 ((E_4 \ln \al - \al) E_3) = 4a^2.\]
	This implies that the sectional curvature of $\DE$ is
	\begin{align*}
		K(E_1,E_2)&=R(E_1, E_2, E_2, E_1) = -(R(E_1, E_2) E_1, E_2)\\
		&= -\frac{2a^2 H^2 + \Delta \ln h}{g^2 h^2} - \al^2
		=-\frac{2a^2 H^2 + \Delta \ln h}{\sinh^2 az h^2} - 4a^2 \coth^2 az
	\end{align*}
	and of $\De$ is
	\[K(E_3,E_4)=R(E_3, E_4, E_4, E_3) = -(R(E_3, E_4) E_3, E_4) = -4a^2.\]
	\ref{it:8} From \ref{it:7} it follows that
	\[K(E_1, E_2) = K(E_3, E_4) = -4a^2\]
	if and only if
	\begin{equation}\label{deltacoth}
		\Delta \ln h = -4a^2 h^2 - 2 a^2 H^2.
	\end{equation}
	It follows from \ref{it:1} that in this case we have
	\[\begin{cases}
		\Delta \ln h &= -4a^2 h^2 - 2 a^2 H^2,\\
		\Delta \ln H &= 2a^2 h^2+4a^2 H^2.
	\end{cases}\]
	If (\ref{deltacoth}) holds, then $\delta=0$, $\kappa=0$ and $\tau = -24a^2$. Then from (\ref{riemann}) we obtain that the Riemann curvature tensor $R=-4a^2 \Pi$ and hence the holomorphic sectional curvature is $H=-4a^2$, since for $X$ such that $\vert \vert X \vert \vert = 1$ we have
	\[H(X) = K(X,\bJ X) = R(X,\bJ X,\bJ X,X) = -4a^2 \Pi (X,\bJ X,\bJ X,X) = -4a^2.\]
	It follows that (\ref{deltacoth}) holds if and only if the holomorphic sectional curvature is constant and is equal to $-4a^2$.
	Then we conclude that $(M,g)$ is Einstein and is a space form if and only if $\Delta \ln h = -4a^2 h^2 - 2 a^2 H^2$, because it is generalized Calabi type K\"ahler surface which is not of Calabi type and it has constant holomorphic sectional curvature. It is locally isometric to $B^2$ with its standard metric of constant holomorphic sectional curvature $-4a^2$ (see \cite{J-M}).
\end{proof}

Finally, we have the last case.
\begin{theorem}
	If metric $g$ is as in Theorem \ref{tw:tanh} ($\al=-2a\tanh az$, $a \in \mathbb{R}$, $a \neq 0$), then
	\begin{enumerate}[label=\upshape(\roman*), leftmargin=*, widest=iiii]
		\item $\Delta \ln H=-2a^2 h^2+4a^2 H^2$ on $U$, \label{it:1}
		\item $\G^2_{11} = -\frac{a \sinh az \cos 2at H}{\cosh^2 az h}-\frac{h_y}{\cosh az h^2}$ and $\G^1_{22} = -\frac{a \sinh az \sin 2at H}{\cosh^2 az h} - \frac{h_x}{\cosh az h^2}$, \label{it:2}
		\item the Ricci tensor of $(M,g)$ represented in the frame $\{E_1,E_2,E_3,E_4\}$ is 
		\[
		Ric=\begin{bmatrix}
			\frac{4a^2 h^2 - 2a^2 H^2 -\Delta \ln h}{h^2 \cosh^2 az} -6a^2 & 0 & 0 & 0 \\
			0 & \frac{4a^2 h^2 - 2a^2 H^2 -\Delta \ln h}{h^2 \cosh^2 az} -6a^2 & 0 & 0\\
			0 & 0 & -6a^2 & 0\\
			0 & 0 & 0 & -6a^2
		\end{bmatrix},\]\label{it:3}
		\item the scalar curvature of $(M,g)$ is $\tau = \frac{8a^2 h^2 - 4a^2 H^2 -2\Delta \ln h}{h^2 \cosh^2 az} -24a^2$,\label{it:4}
		\item the conformal scalar curvature of $(M,g)$ is $\kappa = \frac{8a^2 h^2 - 4a^2 H^2 -2\Delta \ln h}{h^2 \cosh^2 az}$,\label{it:5}
		\item the Weyl tensor
		\[W^- =
		\begin{bmatrix}
			\frac{4a^2 h^2 - 2a^2 H^2 -\Delta \ln h}{3 h^2 \cosh^2 az} & 0 & 0 \\
			0 & \frac{-4a^2 h^2 +2a^2 H^2 +\Delta \ln h}{6 h^2 \cosh^2 az} & 0\\
			0 & 0 & \frac{-4a^2 h^2 +2a^2 H^2 +\Delta \ln h}{6 h^2 \cosh^2 az}
		\end{bmatrix}\] in the frame $\{\Om, \phi, \psi \}$, where $\{\phi, \psi \}$ is any orthonormal basis of $LM$, \label{it:6}
		\item the sectional curvature of $(M,g)$ of $\DE$ is
		$K(E_1,E_2)= -\frac{2a^2 H^2 + \Delta \ln h}{\cosh^2 az h^2} - 4a^2 \tanh^2 az$
		and of $\De$ is
		$K(E_3,E_4)= -4a^2$,\label{it:7}
		\item $(M,g)$ is a space form (it is locally isometric to $B^2$ with its standard metric of constant holomorphic sectional curvature $-4a^2$) if and only if $\Delta \ln h = 4a^2 h^2 - 2 a^2 H^2$.\label{it:8}
	\end{enumerate}
\end{theorem}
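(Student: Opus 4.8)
\emph{Proof proposal.} The plan is to repeat, line for line, the scheme used in the three preceding theorems, carrying every computation through with $\al=-2a\tanh az$. Item \ref{it:1} is immediate from Theorem \ref{tw:tanh}. For the rest, the key inputs are the structure equations (\ref{dteta}), the connection forms (\ref{omegi}), the curvature identities (\ref{R12}), (\ref{R34}), the Ricci formulas (\ref{Ricci}), the identity (\ref{teta2}) for $|\te|^2+\dl\te$, the general shape $W^-=\mathrm{diag}(\tfrac{\ka}{6},-\tfrac{\ka}{12},-\tfrac{\ka}{12})$ in the frame $\{\0,\phi,\psi\}$, and (\ref{riemann}); all are available from the earlier sections and from \cite{J-1}, \cite{J-M}.

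For \ref{it:2} I would introduce the adapted coordinate fields $E_1=\frac1f\p_x+k\p_z+l\p_t$, $E_2=\frac1f\p_y+m\p_z+n\p_t$, $E_3=\frac1\bt\p_t$, $E_4=\p_z$, put $\bt=\sinh 2az$ and denote the conformal factor by $g=\cosh az$ (as in the earlier proofs). Matching $\te_1=fdx$, $\te_2=fdy$, $\te_4=dz-(fk)dx-(fm)dy$, $\te_3=\bt dt-(\bt lf)dx-(\bt nf)dy$ against the explicit $\te_3,\te_4$ of Theorem \ref{tw:tanh} gives $f=gh$, $k=\frac{\sin 2at\,H}{gh}$, $m=\frac{\cos 2at\,H}{gh}$, together with the expressions for $l,n$ in terms of $\coth 2az$, $H$ and $(\ln H)_x,(\ln H)_y$. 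Then I compute $d\te_1$ directly ($d\te_1=gh_y\,dy\w dx+g_z h\,dz\w dx$ with $g_z=a\sinh az$) and also from (\ref{dteta}), and compare the $dx\w dy$ coefficients; since $\tfrac12\al/g=-a\sinh az/\cosh^2 az$ this yields $\G^2_{11}=\tfrac12\al\frac{\cos 2at\,H}{gh}-\frac{h_y}{gh^2}=-\frac{a\sinh az\cos 2at\,H}{\cosh^2 az\,h}-\frac{h_y}{\cosh az\,h^2}$, and the analogous computation with $x\leftrightarrow y$, $\cos 2at\leftrightarrow\sin 2at$ (and the sign dictated by (\ref{dteta})) gives $\G^1_{22}$.

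Items \ref{it:3}–\ref{it:6} are then substitutions. Using $E_4\al=\p_z\al=-2a^2(1-\tanh^2 az)$, $\al^2=4a^2\tanh^2 az$, the values of $\G^2_{11},\G^1_{22}$ and their $E_1,E_2$-derivatives, and $E_i\ln\al$, in (\ref{Ricci}) one checks that all off-diagonal entries vanish, that $Ric(E_3,E_3)=Ric(E_4,E_4)=-E_4(E_4\ln\al-\al)-(E_4\ln\al-\al)^2+\al(E_4\ln\al-\al)+\tfrac{\al^2}2=-6a^2$, and that $Ric(E_1,E_1)=Ric(E_2,E_2)$ collapses — after using the double‑angle identities to kill the $t$-dependent terms and \ref{it:1} to eliminate $\Delta\ln H$ in favour of $h^2$ and $H^2$ — to $\frac{4a^2h^2-2a^2H^2-\Delta\ln h}{h^2\cosh^2 az}-6a^2$; summing gives $\tau$ in \ref{it:4}. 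By (\ref{teta2}), $|\te|^2+\dl\te=2E_4\al-\al^2=-4a^2(1-\tanh^2 az)-4a^2\tanh^2 az=-4a^2$, so $\ka=\tau+24a^2=\frac{8a^2h^2-4a^2H^2-2\Delta\ln h}{h^2\cosh^2 az}$, which is \ref{it:5}, and \ref{it:6} follows from the general form of $W^-$.

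For \ref{it:7}, (\ref{R12}) gives $(R(E_1,E_2)E_1,E_2)=\frac{2a^2H^2+\Delta\ln h}{g^2h^2}+\al^2$ after the same cancellations, hence $K(E_1,E_2)=-\frac{2a^2H^2+\Delta\ln h}{\cosh^2 az\,h^2}-4a^2\tanh^2 az$; and (\ref{R34}) with $\om^4_3$ from (\ref{omegi}) gives $(R(E_3,E_4)E_3,E_4)=4a^2$, so $K(E_3,E_4)=-4a^2$. Finally, for \ref{it:8}: $K(E_1,E_2)=K(E_3,E_4)=-4a^2$ forces $-\tfrac{2a^2H^2+\Delta\ln h}{\cosh^2 az\,h^2}=-4a^2(1-\tanh^2 az)=-\tfrac{4a^2}{\cosh^2 az}$, i.e. $\Delta\ln h=4a^2h^2-2a^2H^2$, and conversely this relation makes both curvatures equal $-4a^2$. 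In that case the two Ricci eigenvalues coincide ($\dl=0$), $\ka=0$ and $\tau=-24a^2$, so (\ref{riemann}) reduces to $R=-4a^2\Pi$; thus the holomorphic sectional curvature is the constant $-4a^2$ and $(M,g)$ is Einstein, hence by the classification of \cite{J-M} locally isometric to $B^2$ with its standard metric; conversely a space form has $\dl=\ka=0$, which by \ref{it:7} is exactly the displayed relation. The main obstacle is the bookkeeping in \ref{it:3} and \ref{it:7}: one must repeatedly use $\p_z\tanh az=a(1-\tanh^2 az)$ and the $\sinh/\cosh$ double‑angle identities to see the $t$-dependent terms ($\cos 2at$, $\sin 2at$, $H^2$-cross terms) cancel, and — crucially — keep track of the sign $-2a^2h^2$ in \ref{it:1}, which is what distinguishes this case from the $\coth$ case and produces the $+4a^2h^2$ in \ref{it:8}.
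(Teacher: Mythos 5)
Your proposal is correct and follows essentially the same route as the paper: the same adapted frame with $\beta=\sinh 2az$, $g=\cosh az$, comparison of $d\te_1,d\te_2$ with (\ref{dteta}) for \ref{it:2}, substitution into (\ref{Ricci}), (\ref{teta2}), (\ref{R12}), (\ref{R34}) and the general diagonal form of $W^-$ for \ref{it:3}--\ref{it:7}, and the reduction $\delta=0$, $\kappa=0$, $\tau=-24a^2$, $R=-4a^2\Pi$ via (\ref{riemann}) together with the classification in \cite{J-M} for \ref{it:8}. No gaps beyond the routine bookkeeping you already flag.
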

\begin{proof}
	\ref{it:1} It follows from Theorem \ref{tw:tanh} that $\Delta \ln H=-2a^2 h^2+4a^2 H^2$ on $U$.\\
	\ref{it:2} Take a coordinate system such that $E_1=\frac1f\p_x+k\p_z+l\p_t,E_2=\frac1f\p_y+m\p_z+n\p_t,E_3=\frac1\beta\p_t,E_4=\p_z$. Then  $\te_1=fdx, \te_2=fdy, \te_4=dz-(fk)dx-(fm)dy,\te_3=\beta dt-(\beta lf)dx-(\beta nf)dy$.  Let  $\al=-2a\tanh az,\beta=\sinh 2az$. Let $g=\cosh az$. Then \[f=gh, m=\frac{\cos 2at H}{gh}, k=\frac{\sin 2at H}{gh},\]
	\[n=\frac{-\coth 2az \sin 2at H +\frac{1}{2a} (\ln H)_x}{gh},
	l=\frac{\coth 2az \cos 2at H -\frac{1}{2a} (\ln H)_y}{gh}.\]
	Since
	\[\te_1 = f dx = gh dx,\]
	we have
	\[d \te_1 = gh_y dy \w dx + g_z h dz \w dx.\]
	On the other hand, from (\ref{dteta}) we have
	\begin{align*}
		d \te_1 &= \G^2_{11} \te_1 \w \te_2 + \frac{1}{2} \al \te_1 \w \te_4\\
		&= \G^2_{11} g^2 h^2 dx \w dy  + \frac{1}{2} \al (f dx) \w (dz - fk dx - fm dy)\\
		&= (\G^2_{11} g^2 h^2 - \frac{1}{2} \al g h \cos 2at H) dx \w dy  + \frac{1}{2} \al gh dx \w dz.
	\end{align*}
	This yields
	\begin{align*}
		\G^2_{11} &= \frac{1}{2} \al \frac{\cos 2at H}{gh} - \frac{h_y}{gh^2} = \frac{1}{2} \al \frac{\cos 2at H}{\cosh az h}-\frac{h_y}{\cosh az h^2}\\
		&= -\frac{a \sinh az \cos 2at H}{\cosh^2 az h}-\frac{h_y}{\cosh az h^2}.
	\end{align*}
	Similarly
	\[\te_2 = f dy = gh dy\]
	and
	\[d \te_2 = gh_x dx \w dy + g_z h dz \w dy.\]
	On the other hand, from (\ref{dteta}) we have
	\begin{align*}
		d \te_2 &= -\G^1_{22} \te_1 \w \te_2 + \frac{1}{2} \al \te_2 \w \te_4\\
		&= -\G^1_{22} g^2 h^2 dx \w dy  + \frac{1}{2} \al (f dy) \w (dz - fk dx - fm dy)\\
		&= (-\G^1_{22} g^2 h^2 + \frac{1}{2} \al g h \sin 2at H) dx \w dy  + \frac{1}{2} \al gh dy \w dz
	\end{align*}
	and we get
	\begin{align*}
		\G^1_{22} &= \frac{1}{2} \al \frac{\sin 2at H}{gh} - \frac{h_x}{gh^2} = \frac{1}{2} \al \frac{\sin 2at H}{\cosh az h} - \frac{h_x}{\cosh az h^2}\\
		&= -\frac{a \sinh az \sin 2at H}{\cosh^2 az h} - \frac{h_x}{\cosh az h^2}.
	\end{align*}
	\ref{it:3} From (\ref{Ricci}) we obtain
	\begin{align*}
		Ric(E_1, E_1) &= Ric(E_2, E_2) = \frac{4a^2 h^2 - 2a^2 H^2 -\Delta \ln h}{h^2 \cosh^2 az} -6a^2,\\
		Ric(E_3, E_3) &= Ric(E_4, E_4) = -6a^2
	\end{align*}
	and $Ric(E_i,E_j)=0$ for the other cases. Hence in the frame $\{E_1,E_2,E_3,E_4\}$ we have
	\[
	Ric=\begin{bmatrix}
		\frac{4a^2 h^2 - 2a^2 H^2 -\Delta \ln h}{h^2 \cosh^2 az} -6a^2 & 0 & 0 & 0 \\
		0 & \frac{4a^2 h^2 - 2a^2 H^2 -\Delta \ln h}{h^2 \cosh^2 az} -6a^2 & 0 & 0\\
		0 & 0 & -6a^2 & 0\\
		0 & 0 & 0 & -6a^2
	\end{bmatrix}.\]\\
	\ref{it:4} The scalar curvature of $(M,g)$ is
	\[\tau = \mathrm{tr}_g Ric = \frac{8a^2 h^2 - 4a^2 H^2 -2\Delta \ln h}{h^2 \cosh^2 az} -24a^2.\]
	\ref{it:5} Since by (\ref{teta2})
	\[\vert \te \vert^2 +\delta \te = 2 E_4 \al - \al^2 = -4a^2,\]
	it follows that the conformal scalar curvature of $(M,g)$ is
	\[\kappa = \tau -6(\vert \te \vert^2 +\delta \te)= \frac{8a^2 h^2 - 4a^2 H^2 -2\Delta \ln h}{h^2 \cosh^2 az}.\]
	\ref{it:6} Since the Weyl tensor
	\[W^- =
	\begin{bmatrix}
		\frac{\kappa}{6} & 0 & 0\\
		0 & -\frac{\kappa}{12} & 0\\ 
		0 & 0 & -\frac{\kappa}{12}\\ 
	\end{bmatrix},\] we have
	\[W^- =
	\begin{bmatrix}
		\frac{4a^2 h^2 - 2a^2 H^2 -\Delta \ln h}{3 h^2 \cosh^2 az} & 0 & 0 \\
		0 & \frac{-4a^2 h^2 +2a^2 H^2 +\Delta \ln h}{6 h^2 \cosh^2 az} & 0\\
		0 & 0 & \frac{-4a^2 h^2 +2a^2 H^2 +\Delta \ln h}{6 h^2 \cosh^2 az}
	\end{bmatrix}\] in the frame $\{\Om, \phi, \psi \}$, where $\{\phi, \psi \}$ is any orthonormal basis of $LM$.\\
	\ref{it:7} From (\ref{R12}) we have
	\begin{align*}
		(R(E_1, E_2) E_1, E_2) &= -E_1 \G^1_{22} - E_2 \G^2_{11} + (\G^2_{11})^2 + (\G^1_{22})^2 + \al^2\\
		&= \frac{2a^2 H^2 + \Delta \ln h}{g^2 h^2} + \al^2
	\end{align*}
	and from (\ref{R34}) we obtain
	\[(R(E_3, E_4) E_3, E_4) = E_3 \om^4_3 (E_4) - E_4 \om^4_3 (E_3) - \om^4_3 ((E_4 \ln \al - \al) E_3) = 4a^2.\]
	This implies that the sectional curvature of $\DE$ is
	\begin{align*}
		K(E_1,E_2)&=R(E_1, E_2, E_2, E_1) = -(R(E_1, E_2) E_1, E_2)\\
		&= -\frac{2a^2 H^2 + \Delta \ln h}{g^2 h^2} - \al^2
		=-\frac{2a^2 H^2 + \Delta \ln h}{\cosh^2 az h^2} - 4a^2 \tanh^2 az
	\end{align*}
	and of $\De$ is
	\[K(E_3,E_4)=R(E_3, E_4, E_4, E_3) = -(R(E_3, E_4) E_3, E_4) = -4a^2.\]
	\ref{it:8} From \ref{it:7} it follows that
	\[K(E_1, E_2) = K(E_3, E_4) = -4a^2\]
	if and only if
	\begin{equation}\label{deltatanh}
		\Delta \ln h = 4a^2 h^2 - 2 a^2 H^2.
	\end{equation}
	It follows from \ref{it:1} that in this case we have
	\[\begin{cases}
		\Delta \ln h &= 4a^2 h^2 - 2 a^2 H^2,\\
		\Delta \ln H &= -2a^2 h^2+4a^2 H^2.
	\end{cases}\]
	If (\ref{deltatanh}) holds, then $\delta=0$, $\kappa=0$ and $\tau = -24a^2$. Then from (\ref{riemann}) we obtain that the Riemann curvature tensor $R=-4a^2 \Pi$ and hence the holomorphic sectional curvature is $H=-4a^2$, since for $X$ such that $\vert \vert X \vert \vert = 1$ we have
	\[H(X) = K(X,\bJ X) = R(X,\bJ X,\bJ X,X) = -4a^2 \Pi (X,\bJ X,\bJ X,X) = -4a^2.\]
	It follows that (\ref{deltatanh}) holds if and only if the holomorphic sectional curvature is constant and is equal to $-4a^2$.
	Then we conclude that $(M,g)$ is Einstein and is a space form if and only if $\Delta \ln h = 4a^2 h^2 - 2 a^2 H^2$, because it is generalized Calabi type K\"ahler surface which is not of Calabi type and it has constant holomorphic sectional curvature. It is locally isometric to $B^2$ with its standard metric of constant holomorphic sectional curvature $-4a^2$ (see \cite{J-M}). 
\end{proof}

\begin{remark}
	Note that in every case of generalized Calabi type K\"ahler surface which is not of Calabi type we have $W^-=0$ if and only if $(M,g)$ is a space form. If $(M,g)$ is not a space form, then $W^- \neq 0$.
\end{remark} 

\begin{remark}
	Note that we get examples of local hermitian structures, which give opposite orientation, which are not locally conformally K\"ahler on space forms $\mathbb{CP}^2$, $\mathbb{C}^2$ and $B^2$.\\
\end{remark}

\textbf{\large{Acknowledgements}}

The author is grateful to Professor Włodzimierz Jelonek for his advise and help during preparations of this paper.

\bigskip
\textbf{Author's adress:}\\
Department of Applied Mathematics\\
Cracow  University of Technology\\
Warszawska 24\\
31-155 Krak\'ow, POLAND\\
\emph{Email address: ewelina.mulawa@pk.edu.pl}

\end{document}